\theoremstyle{plain}
\newtheorem{Thm}{Theorem}[section]
\newtheorem{Lem}[Thm]{Lemma}
\newtheorem{Prop}[Thm]{Proposition}
\newtheorem{Cor}[Thm]{Corollary}
\newtheorem{Conj}[Thm]{Conjecture}
\theoremstyle{definition}
\newtheorem{Defi}[Thm]{Definition}
\newtheorem{Rem}[Thm]{Remark}
\tikzstyle{vertex}=[circle, draw, inner sep=0pt, minimum size=6pt] 
\newcommand{\vertex}{\node[vertex]}
\newcommand{\RR}{\mathbb{R}} 
\newcommand{\NN}{\mathbb{N}} 
\newcommand{\AAA}{\mathcal{A}} 
\newcommand{\NNN}{\mathcal{N}}
\newcommand{\GGG}{\mathcal{G}}
\title{On weak majority dimensions of digraphs}
\date{}
\author[1]{\small Soogang Eoh}
\author[1]{\small Suh-Ryung Kim}
\affil[1]{\footnotesize Department of Mathematics Education, Seoul National University, Seoul 08826}
\affil[ ]{\footnotesize\textit{mathfish@snu.ac.kr, srkim@snu.ac.kr}}
\begin{document}

\maketitle

\begin{abstract}
In this paper, we introduce the notion of the weak majority dimension of a digraph which is well-defined for any digraph.
We first study properties shared by the weak dimension of a digraph and show that a weak majority dimension of a digraph can be arbitrarily large.
Then we present a complete characterization of digraphs of weak majority dimension $0$ and $1$, respectively, and show that every digraph with weak majority dimension at most two is transitive.
Finally, we compute the weak majority dimensions of directed paths and directed cycles and pose  open problems.
\end{abstract}

\noindent
{\bf Keywords:}
 majority relation; weak majority relation; weak majority dimension; Erd\"{o}s-Szekeres lemma; comparable; directed path; directed cycle

\noindent
{\bf 2010 Mathematics Subject Classification:}
05C20,  05C75, 05C38

\section{Introduction}

Throughout this paper, we only deal with digraphs whose underlying graphs are simple.

For a positive integer $n$, we denote the set $\{1,2,\ldots,n\}$ by $[n]$.
For a nonnegative integer $d$ and a point $u$ in $\RR^d$, we denote the $i$th coordinate of $u$ by $[u]_i$ so that $u = ([u]_1, [u]_2, \ldots, [u]_d)$.
In the rest of this paper, we assume that $d$ is a nonnegative integer unless otherwise stated.

While studying the  partial order competition dimension of a graph (see~\cite{choi2016competition} and \cite{choi2017partial}), we have come up with an idea of defining the dimension of a digraph.
In ecosystem, we may represent each species as a vector in $\RR^d$ in such a way that the $d$ components of each vector indicate the average speed, the average weight, the average intelligence, and so on, for the species corresponding to the vector.
Then a species A may be regarded as being superior to a species B if $|\{ i \in [d] \mid [v]_i > [w]_i \}| > |\{i \in [d] \mid [w]_i > [v]_i \}|$ where $v$ and $w$ are the vector corresponding to A and B, respectively.

On the other hand, we let each of voters grant marks represented by real numbers to each of alternatives for evaluation.
To be more precise, let $d$ be the number of voters.
For two alternatives $x$ and $y$, let $x_i$ and $y_i$ be the marks given by the voter $i$.
Then we may correspond $x$ to $(x_1,x_2,\ldots, x_d)\in \mathbb{R}^d$ and $y$ to $(y_1,y_2,\ldots, y_d) \in \mathbb{R}^d$ (by convention, the zero-dimensional Euclidean space $\RR^0$ consists of a single point $0$).
For notational convenience, we write $x=(x_1,x_2,\ldots, x_d)\in \mathbb{R}^d$ and $y=(y_1,y_2,\ldots, y_d) \in \mathbb{R}^d$.
For $x$ and $y$ in $\RR^d$, we let $$\GGG_{x>y}=\{i \in [d] \mid x_i > y_i \}.$$
We define a \emph{weak majority relation} $\prec$ on $\RR^d$ by
$$y \succ x \Leftrightarrow |\GGG_{y>x}|-|\GGG_{x>y}|>0.$$
If $x \succ y$ or $y \succ x$, then we say that $x$ and $y$ are \emph{comparable} in $(\RR^d, \succ)$.
Otherwise, we say that $x$ and $y$ are \emph{incomparable} in $(\RR^d, \succ)$.
Therefore $x$ and $y$ are comparable in $(\RR^d, \succ)$ if and only if $|\GGG_{x>y}| \neq |\GGG_{y>x}|$.

For a nonnegative integer $d$, we say that a digraph $D$ is \emph{$\RR^d$-realizable} if we may assign a map $f:V(D) \to \RR^d$ for some nonnegative integer $d$ so that $(x, y) \in A(D)$ $\Leftrightarrow$ $f(x) \succ f(y)$.
We call such a map $f$ an \emph{$\RR^d$-realizer} of $D$.

Suppose that $D$ is $\RR^d$-realizable for some nonnegative integer $d$ and $r$ is an integer greater than $d$.
Then there exists an $\RR^d$-realizer $f$ of $D$.
We may extend the codomain of $f$ by adjoining $r-d$ components with zero at the end of $f$-value of each vertex to obtain an $\RR^r$-realizer of $D$.
Thus it is true that
\begin{itemize}
  \item[($\star$)] $D$ is $\RR^d$-realizable for some nonnegative integer $d$, then it is $\RR^r$-realizable for any  positive integer $r$, $r > d$.
\end{itemize}

Now we define the weak majority dimension of a digraph in terms of $\RR^d$-realizability, which shall be mainly studied in this paper.

\begin{Defi}
For a digraph $D$,
the \emph{weak majority dimension}, denoted by $\dim(D)$, of $D$ is the minimum nonnegative integer $d$ such that $D$ is $\RR^d$-realizable.
\end{Defi}

Originally, we named a weak majority relation a majority relation.
Upon completing paper, we have searched papers with titles including majority relation just in case in which there might be existing work.
We found that there has been a lot of research done on the ``majority relation'' which is analogous to our majority relation \cite{seedig2015majority}.
Fortunately, our majority relation turns out to be a generalization of the ``majority relation'' in the following sense.

Let $\AAA$ be a set of $m$ \emph{alternatives} and $\NNN:=[n]$ be a set of \emph{voters}.
The preferences of voter $i$ in $\NNN$ are represented by an asymmetric, transitive, and complete relation $\succ_i \subset \AAA \times \AAA$.
We may interpret $(a,b) \in \succ_i$ as the voter $i$ preferring the alternative $a$ to the alternative $b$.

A \emph{preference profile} $R:=(\succ_1, \succ_2, \ldots, \succ_n)$ is an $n$-tuple containing a preference relation $\succ_i$ for each voter $i \in \NNN$.
For a preference profile $R$ and two alternatives $a$ and $b$ in $\AAA$, the \emph{majority margin} $g_R(a,b)$ is defined as difference between the number of voters who prefer $a$ to $b$ and the number of voters who prefer $b$ to $a$, that is,
\[
g_R(a,b) = |\{i \in \NNN \mid a \succ_i b\}| - |\{i \in \NNN \mid b \succ_i a\}|.
\]

The \emph{majority relation} $\succ_R$ of the preference profile $R$ is defined as
\[
a \succ_R b \text{ if and only if } g_R(a,b) > 0.
\]

\begin{Defi}
A \emph{weak preference profile} is a preference profile $(\succeq_1, \succeq_2, \ldots, \succeq_n)$ containing a reflexive, \emph{antisymmetric}, transitive, and complete preference relation $\succeq_i$ for each voter $i$ in $\NNN$.
\end{Defi}

We may encounter a weak preference profile in a real-world situation.
At a tasting event, participants are allowed to be indifferent between two choices.
At a job interview, interviewers are allowed to be indifferent between two interviewees.
In fact, we may regard a weak preference profile as a generalization of preference profile.
We note that
\[
g_R(a,b)=\{i \in \NNN \mid a \succ_i b\}| - |\{i \in \NNN \mid b \succ_i a\}|=\{i \in \NNN \mid a \succeq_i b\}| - |\{i \in \NNN \mid b \succeq_i a\}|.
\]
Therefore the notion of majority margin may be extended to weak preference profiles and we may define the weak majority relation of a weak preference profile in terms of majority margin.

\begin{Defi}
The \emph{weak majority relation} of a weak majority preference profile $R$ is defined by
\[
a \succ_R b \text{ if and only if } g_R(a,b) > 0.
\]
\end{Defi}

Then, by definition, the weak majority relation $\succ_R$ of $R$ may be embedded into a weak majority relation $\succ$ on $\RR^d$.
Conversely, let $d$ be a nonnegative integer and $\succ$ be a weak majority relation on $\RR^d$.
We define $\succeq_i$ on $\RR^d$ by
\[
x \succeq_i y \quad \text{if and only if} \quad x_i \ge y_i
\]
for $x = (x_1,x_2,\ldots, x_d)\in \mathbb{R}^d$ and $y = (y_1,y_2,\ldots, y_d) \in \mathbb{R}^d$.
Then we let $R=(\succeq_1, \ldots, \succeq_d)$.
It is easy to check that $R$ is a weak preference profile and
\[
|\GGG_{x>y}|-|\GGG_{y>x}|= g_R(x,y)= |\{i \in \NNN \mid x \succeq_i y\}| - |\{i \in \NNN \mid y \succeq_i x\}|.
\]

\begin{Rem}
Given a preference profile with $d$ voters, the majority relation on the set of choices may be embedded into $\succ$ restricted to ${\RR^*}^d$ where ${\RR^*}^d$ is a set of $d$-tuples in $\RR^d$ without equal components.
Then we may define the notion of  ${\RR^*}^d$-realizable similarly to $\RR^d$-realizable by restricting the codomain of $f$ to ${\RR^*}^d$.
The majority dimension of a digraph $D$ is defined to be the minimum nonnegative integer $d$ such that $D$ is ${\RR^*}^d$-realizable.
Thus, given a digraph $D$, the weak majority dimension of $D$ is less than or equal to the majority dimension of $D$.
It is known that the majority dimension of a digraph is well-defined.
Therefore the weak majority dimension is well-defined.
\end{Rem}
%

\section{Properties shared by the weak majority dimension of a digraph}

We denote the point $(\min\{[u]_1,[v]_1\}, \ldots, \min\{[u]_d,[v]_d\})$  in $\RR^d$ by $\min\{u,v\}$.
Therefore $[\min\{u,v\}]_i = \min\{ [u]_i, [v]_i\}$ for any $i \in [d]$.

We first show that every digraph is $\RR^d$-realizable for some positive integer $d$.
We need the following lemmas.

\begin{Lem}\label{lem:no arc}
For a digraph $D$, $\dim(D) = 0$ if and only if $D$ is an empty digraph.
\end{Lem}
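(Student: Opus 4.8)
The plan is to prove the two directions of the equivalence separately, with the reverse direction being immediate and the forward direction requiring a little care about the degenerate case $d=0$.

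For the ``if'' direction, suppose $D$ is an empty digraph, i.e.\ $A(D) = \emptyset$. I claim the constant map $f \colon V(D) \to \RR^0$ sending every vertex to the single point $0 \in \RR^0$ is an $\RR^0$-realizer. Indeed, for any two vertices $x, y$ we have $\GGG_{f(x) > f(y)} = \GGG_{f(y) > f(x)} = \emptyset$ (there are no coordinates at all), so $|\GGG_{f(x)>f(y)}| - |\GGG_{f(y)>f(x)}| = 0$, hence $f(x) \not\succ f(y)$ and $f(y) \not\succ f(x)$ for every pair. Since $A(D) = \emptyset$, the biconditional $(x,y) \in A(D) \Leftrightarrow f(x) \succ f(y)$ holds vacuously on both sides. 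Thus $D$ is $\RR^0$-realizable, and since $0$ is the minimum nonnegative integer, $\dim(D) = 0$.

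For the ``only if'' direction, suppose $\dim(D) = 0$, so $D$ admits an $\RR^0$-realizer $f \colon V(D) \to \RR^0$. Every vertex maps to the unique point of $\RR^0$. For any ordered pair $(x,y)$ of vertices, the weak majority relation on $\RR^0$ compares $|\GGG_{f(x)>f(y)}|$ with $|\GGG_{f(y)>f(x)}|$; but $[0] = \emptyset$ since $d = 0$, so both sets are empty and their cardinalities are equal. Hence $f(x) \not\succ f(y)$ for every pair, and by the realizer condition $(x,y) \notin A(D)$ for every pair. Therefore $A(D) = \emptyset$ and $D$ is empty.

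I do not anticipate a genuine obstacle here; the only thing to be careful about is making the conventions for $\RR^0$ explicit — namely that $\RR^0 = \{0\}$ and $[0] = \emptyset$, so that the index set over which the weak majority relation quantifies is empty. Both implications then reduce to the observation that over an empty index set every majority margin is $0$, so no two points of $\RR^0$ are ever comparable. One might alternatively phrase the forward direction via the contrapositive: if $D$ has an arc $(x,y)$, then any realizer $f$ would need $f(x) \succ f(y)$, which is impossible in $\RR^0$, so $\dim(D) \geq 1$; but the direct argument above is equally short.
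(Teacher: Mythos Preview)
Your proof is correct and follows essentially the same approach as the paper: both arguments unwind the definition of an $\RR^0$-realizer, use that $\RR^0=\{0\}$ with empty index set $[0]=\emptyset$, and conclude that no two points in $\RR^0$ are comparable, whence $\dim(D)=0$ iff $A(D)=\emptyset$. The paper compresses this into a single chain of biconditionals, while you spell out each direction separately, but the content is identical.
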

\begin{proof}
By definition, $\dim(D) = 0$ if and only if there is an $\RR^0$-realizer $f$ of $D$ if and only if  there is a map $f:V(D) \to \{0\}$ such that $(x, y) \in A(D)$ $\Leftrightarrow$ $f(x) \succ f(y)$ if and only if $D$ is an empty digraph.
\end{proof}

\begin{Lem}\label{lem:1 arc dim}
Every digraph having exactly one arc has weak majority dimension at most two.
\end{Lem}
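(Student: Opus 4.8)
The plan is to construct an explicit $\RR^2$-realizer of $D$. Write $V(D) = \{u_1, u_2, \ldots, u_n\}$ so that $(u_1, u_2)$ is the unique arc of $D$; since $D$ has an arc, $n \ge 2$. The construction rests on the following observation about $(\RR^2, \succ)$: for $p, q \in \RR^2$ one has $p \succ q$ if and only if $p$ weakly dominates $q$ coordinatewise with at least one strict inequality, so that $p$ and $q$ are \emph{incomparable} in $(\RR^2, \succ)$ exactly when either $p = q$, or one coordinate of $p$ is strictly larger while the other is strictly smaller than the corresponding coordinate of $q$ (this just enumerates the possible values of $(|\GGG_{p>q}|, |\GGG_{q>p}|)$ when $d = 2$).

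I would then define $f : V(D) \to \RR^2$ by $f(u_1) = (1,1)$, $f(u_2) = (0,0)$, and $f(u_i) = (i, -i)$ for $3 \le i \le n$, and check that $f(x) \succ f(y)$ holds precisely for $(x,y) = (u_1, u_2)$. For the arc, $f(u_1) = (1,1)$ dominates $f(u_2) = (0,0)$ in both coordinates, so $f(u_1) \succ f(u_2)$, hence also $f(u_2) \not\succ f(u_1)$. For $i \ge 3$ the point $(i,-i)$ has first coordinate at least $3$ and second coordinate at most $-3$, so comparing it with $(1,1)$ or with $(0,0)$ gives a strict increase in the first coordinate and a strict decrease in the second; thus $u_i$ is incomparable with both $u_1$ and $u_2$. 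For $3 \le i < j \le n$ we have $i < j$ and $-i > -j$, so $u_i$ and $u_j$ are incomparable. This shows $f$ is an $\RR^2$-realizer of $D$, whence $\dim(D) \le 2$.

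There is no real obstacle; the only point requiring a little care is the placement of the ``non-arc'' vertices, which must be made simultaneously incomparable with one another and with both endpoints of the arc. Putting them on the strictly decreasing antidiagonal $\{(t,-t)\}$ handles mutual incomparability automatically, and starting the index at $i = 3$ (rather than $i = 1$) keeps these points off the two coordinate values used for $u_1$ and $u_2$ --- note that $(1,-1)$ would be comparable with $f(u_1) = (1,1)$. One could additionally observe that the analogous $\RR^1$ argument gives $\dim(D) = 1$ when $n = 2$ but fails for $n \ge 3$, though this is not needed for the stated bound.
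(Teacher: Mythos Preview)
Your proof is correct and follows essentially the same approach as the paper: an explicit $\RR^2$-realizer is written down, with the arc's endpoints placed so that one strictly dominates the other and the remaining vertices placed to be pairwise incomparable and incomparable with both endpoints. The paper's construction is marginally simpler in that it sends every non-arc vertex to the single point $(3,1)$ rather than to distinct points along an antidiagonal, but the idea is the same.
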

\begin{proof}
Let $(u, v)$ be the arc of a digraph $D$ having exactly one arc.
Then we define a map $f : V(D) \to \RR^2$ as
$$
\text{$[f(u)]_i=i+1$, $[f(v)]_i=i$, $[f(w)]_1=3$, and $[f(w)]_2=1$}
$$ for each $i=1, 2$ and each $w \in V(D) \setminus \{u, v\}$.
It is easy to check that $f$ is an $\RR^2$-realizer of $D$.
\end{proof}

It is known that, for a digraph $D$ and an arc $a$ of $D$, the majority dimension of $D$ is at most two less than the majority dimension of $D-a$.
This inequality also holds for the weak majority dimension of a digraph.

\begin{Prop}\label{prop:add arc}
Let $D$ be a nonempty digraph.
Then $\dim(D) \le \dim(D-a)+2$ for any $a \in A(D)$.
\end{Prop}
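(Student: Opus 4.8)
The plan is to take an optimal $\RR^{d}$-realizer of $D-a$, where $d=\dim(D-a)$, and modify it into an $\RR^{d+2}$-realizer of $D$ by appending two coordinates that ``switch on'' exactly the one arc $a$ we need to add, while leaving all other comparisons (and non-comparisons) unchanged. Write $a=(u,v)$, and let $g:V(D)\to\RR^{d}$ be an $\RR^{d}$-realizer of $D-a$. Since $D$ is nonempty, $D-a$ has at least one fewer arc; note also that $(v,u)\notin A(D)$ because the underlying graph of $D$ is simple, so in $D-a$ the vertices $u$ and $v$ are incomparable, i.e. $|\GGG_{g(u)>g(v)}|=|\GGG_{g(v)>g(u)}|$.

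First I would define $f:V(D)\to\RR^{d+2}$ by setting $[f(w)]_i=[g(w)]_i$ for $i\in[d]$ and all $w$, and then choosing the last two coordinates. The idea is to make the two new coordinates ``favor $u$ over $v$ by a margin of $2$'' while being neutral between every other pair. Concretely, take $[f(u)]_{d+1}=[f(u)]_{d+2}=2$, take $[f(v)]_{d+1}=[f(v)]_{d+2}=1$, and for every other vertex $w$ set $[f(w)]_{d+1}=[f(w)]_{d+2}$ equal to some common constant (say $3$, or $3/2$ — any value works as long as it is the same in both new coordinates for all such $w$, so that these two coordinates never contribute a net difference among the ``other'' vertices, and contribute the same net difference when comparing $u$ or $v$ to an ``other'' vertex). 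The key point is that the two new coordinates always contribute equal-and-opposite amounts except on the ordered pair $(u,v)$, where they contribute $+2$ to $|\GGG_{f(u)>f(v)}|-|\GGG_{f(v)>f(u)}|$; more precisely, one must check that for any pair $\{x,y\}$ other than $\{u,v\}$, the contribution of coordinates $d+1,d+2$ to $|\GGG_{f(x)>f(y)}|-|\GGG_{f(y)>f(x)}|$ is $0$.

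The main steps are then the following verifications. (i) For the pair $\{u,v\}$: in $D-a$ we have margin $0$, the new coordinates add $+2$ in the $f(u)$-over-$f(v)$ direction, so $|\GGG_{f(u)>f(v)}|-|\GGG_{f(v)>f(u)}|=2>0$, giving $f(u)\succ f(v)$, i.e. $(u,v)\in A(D)$ is realized; and $(v,u)$ is correctly not realized. (ii) For any other ordered pair $(x,y)$ with $x,y\in V(D)$: I would check that appending the new coordinates changes the majority margin by $0$, so $f(x)\succ f(y)$ iff $g(x)\succ g(y)$ iff $(x,y)\in A(D-a)$ iff $(x,y)\in A(D)$ (the last equivalence because $A(D)=A(D-a)\cup\{a\}$ and $(x,y)\neq a$). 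This reduces to a short case analysis: both $x,y$ outside $\{u,v\}$; exactly one of them in $\{u,v\}$; these are handled by the ``equal constant in both new coordinates'' choice, which forces the two new coordinates to contribute nothing net. The one mildly delicate point — the step I expect to be the main obstacle — is bookkeeping in case (ii) when exactly one of $x,y$ equals $u$ or $v$: one has to be sure that assigning $u,v$ the values $2,2$ and $1,1$ (rather than, say, distinct values in the two coordinates) keeps the net contribution of the pair of new coordinates zero against every outside vertex, which is exactly why both new coordinates of $u$ (resp. $v$) are chosen equal. Once that is checked, $f$ is an $\RR^{d+2}$-realizer of $D$, hence $\dim(D)\le d+2=\dim(D-a)+2$, as claimed.
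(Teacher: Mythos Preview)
Your overall strategy is exactly right and matches the paper's: extend an optimal realizer of $D-a$ by two new coordinates that add a positive margin to the pair $(u,v)$ and a net zero margin to every other pair. But the concrete choice you propose for the two new coordinates does not achieve this.

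You set $[f(u)]_{d+1}=[f(u)]_{d+2}=2$, $[f(v)]_{d+1}=[f(v)]_{d+2}=1$, and $[f(w)]_{d+1}=[f(w)]_{d+2}=c$ for every other $w$. Because each vertex gets the \emph{same} value in both new coordinates, the two coordinates never cancel: for any pair $x,y$ the pair of new coordinates contributes $+2$, $0$, or $-2$ to the margin, according as the new value of $x$ is larger than, equal to, or smaller than that of $y$. In particular, take any $w\notin\{u,v\}$ that is nonadjacent to $u$ in $D$ (hence in $D-a$), so the old margin between $u$ and $w$ is $0$. If $c=3/2$ the new coordinates add $+2$ in favour of $u$, and your $f$ produces a spurious arc $(u,w)$; if $c=3$ they add $+2$ in favour of $w$ and you get a spurious arc $(w,u)$. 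Similarly, a genuine arc between $u$ (or $v$) and some $w$ that had margin exactly $1$ in the old realizer can be destroyed. So step~(ii) of your verification fails precisely at the ``mildly delicate point'' you flagged, and your justification there (``which is exactly why both new coordinates of $u$ are chosen equal'') is backwards: equal values force the two coordinates to push in the same direction, not to cancel.

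The fix is to make the two new coordinates pull in opposite directions against outside vertices while both favouring $u$ over $v$. For instance (this is what the paper does), set $[f(w)]_{d+1}=0$, $[f(w)]_{d+2}=1$ for $w\notin\{u,v\}$, and $[f(u)]_{d+1}=2$, $[f(v)]_{d+1}=1$, $[f(u)]_{d+2}=[f(v)]_{d+2}=0$. Then against any outside $w$ the $(d+1)$st coordinate favours $u$ (resp.\ $v$) and the $(d+2)$nd favours $w$, giving net contribution $0$; among outside vertices both new coordinates tie; and on the pair $(u,v)$ the $(d+1)$st coordinate gives $+1$ to $u$ while the $(d+2)$nd is a tie, yielding a net $+1$ that turns the old margin $0$ into a positive one. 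With this correction your argument goes through as written.
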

\begin{proof}
Let $D$ be a nonempty digraph and let $(u, v)$ be an arc of $D$.
If $|A(D)|=1$, then $\dim(D-(u,v))=0$ by Lemma~\ref{lem:no arc} and, moreover,  by Lemma~\ref{lem:1 arc dim}, $\dim(D) \le 2$.
Thus the proposition statement is true if $|A(D)|=1$.
Now suppose $|A(D)| \ge 2$.
Then $\dim(D-(u,v))=d$ for a positive integer $d$ by Lemma~\ref{lem:no arc}.
Let $f$ be an $\RR^d$-realizer of $D-(u,v)$.
Since the underlying graph of $D$ is simple,  there is no arc between $u$ and $v$ in $D-(u,v)$ and so $f(u)$ and $f(v)$ are incomparable.
Thus $|\GGG_{f(u)>f(v)}|=|\GGG_{f(v)>f(u)}|$.
Now we define a map $g:V(D) \to \RR^{d+2}$ of $D$ out of $f$ as follow:
\begin{itemize}
  \item[(i)] $[g(w)]_i=[f(w)]_i$ for any $w \in V(D)$ and any $i \in [d]$;
  \item[(ii)] $[g(w)]_{d+1}=0$ and $[g(w)]_{d+2}=1$ for any $w \in V(D) \setminus \{u, v\}$;
  \item[(iii)] $[g(u)]_{d+1}=2$, $[g(v)]_{d+1}=1$ and $[g(u)]_{d+2}=[g(v)]_{d+2}=0$.
\end{itemize}
It is easy to check that $g$ is an $\RR^{d+2}$-realizer of $D$.
\end{proof}

%
%

The following proposition shows that the weak dimension of an induced subdigraph of a digraph cannot exceed that of the digraph.

\begin{Prop}\label{prop:induced dim}
For a digraph $D$ and its induced subdigraph $D'$, $\dim(D') \le \dim(D)$.
\end{Prop}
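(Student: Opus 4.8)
The plan is to take an $\RR^d$-realizer $f$ of $D$, where $d = \dim(D)$, and simply restrict it to $V(D')$. Since $D'$ is an \emph{induced} subdigraph of $D$, we have $V(D') \subseteq V(D)$ and $A(D') = A(D) \cap (V(D') \times V(D'))$. Set $f' = f|_{V(D')}$, a map $V(D') \to \RR^d$.

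The key step is to verify that $f'$ is an $\RR^d$-realizer of $D'$, i.e.\ that for all $x, y \in V(D')$ we have $(x,y) \in A(D') \Leftrightarrow f'(x) \succ f'(y)$. This follows immediately from two facts: first, the weak majority relation $\succ$ on $\RR^d$ compares $f'(x) = f(x)$ and $f'(y) = f(y)$ exactly as it does as points of $\RR^d$ under $f$, so $f'(x) \succ f'(y) \Leftrightarrow f(x) \succ f(y) \Leftrightarrow (x,y) \in A(D)$; second, since $x, y \in V(D')$ and $D'$ is induced, $(x,y) \in A(D) \Leftrightarrow (x,y) \in A(D')$. Chaining these equivalences gives $(x,y) \in A(D') \Leftrightarrow f'(x) \succ f'(y)$, so $f'$ is indeed an $\RR^d$-realizer of $D'$, whence $\dim(D') \le d = \dim(D)$.

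There is essentially no obstacle here: the argument is a direct unwinding of definitions. The only point that deserves a word is why being an \emph{induced} subdigraph is needed — if $D'$ merely omitted some arcs of $D[V(D')]$, then $f'$ would still force all those arcs to be present in $D'$, and the equivalence would fail; the induced hypothesis is exactly what rules this out. I would present the proof in three short sentences along the lines above.
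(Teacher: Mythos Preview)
Your proof is correct and follows exactly the same approach as the paper: restrict an $\RR^d$-realizer of $D$ to $V(D')$ and observe that, because $D'$ is induced, the restriction is an $\RR^d$-realizer of $D'$. The paper's version is simply terser (it just says ``it is easy to see that the restriction of $f$ to $V(D')$ is an $\RR^d$-realizer of $D'$''), while you have spelled out the equivalences and the role of the induced hypothesis explicitly.
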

\begin{proof}
Let $d=\dim(D)$.
Then there is an $\RR^d$-realizer $f$ of $D$.
It is easy to see that the restriction of $f$ to $V(D')$ is an $\RR^d$-realizer of $D'$.
Hence $\dim(D') \le \dim(D)$.
\end{proof}

Proposition~\ref{prop:induced dim} may be false for a subdigraph $D'$ of a digraph $D$ (see Figure~\ref{fig:construct}).

\begin{figure}
\begin{center}
\begin{tikzpicture}[x=1.0cm, y=1.0cm]

    \vertex (b2) at (0,3) [label=left:$v_1$]{};
    \vertex (b3) at (2,2) [label=right:$v_2$]{};

    \vertex (b4) at (0,1) [label=left:$v_3$]{};


    \path
 (b2) edge [->,thick] (b3)
 (b4) edge [->,thick] (b3)
 (b2) edge [->,thick] (b4)

;
 \draw (1, 0.5) node{$D$};
\end{tikzpicture}
\qquad \qquad
\begin{tikzpicture}[x=1.0cm, y=1.0cm]

      \vertex (b2) at (0,3) [label=left:$v_1$]{};
    \vertex (b3) at (2,2) [label=right:$v_2$]{};

    \vertex (b4) at (0,1) [label=left:$v_3$]{};


    \path

 (b4) edge [->,thick] (b3)
 (b2) edge [->,thick] (b4)

;

 \draw (1,0.5) node{$D'$}
	;
\end{tikzpicture}

\end{center}
\caption{The digraph $D'$ is a subdigraph of $D$. Yet, $\dim(D')=3 > 1 = \dim(D)$.}
\label{fig:construct}
\end{figure}
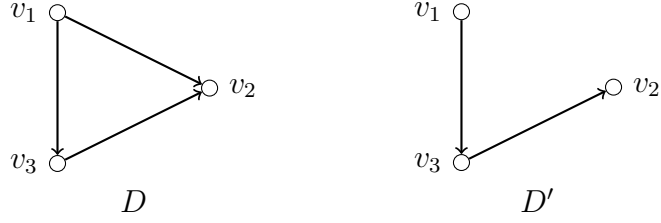

For given two vertex-disjoint digraphs $D_1$ and $D_2$, we denote by $D_1 \cup D_2$ the digraph having the vertex set $V(D_1) \cup V(D_2)$ and the arc set $A(D_1) \cup A(D_2)$.

\begin{Prop}\label{prop:disjoint union}
For a positive integer $k \ge 2$ and $k$ digraphs $D_1$, $D_2$, $\ldots$, $D_k$,
$$d \le \dim(\bigcup_{i=1}^k D_i) \le 2\lfloor (d+1)/2 \rfloor$$
where $d =\max\{\dim(D_1), \ldots, \dim(D_k)\}$.

\end{Prop}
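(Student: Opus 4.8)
The plan is to establish the two bounds separately. For the lower bound, observe that, because $D_1,\dots,D_k$ are pairwise vertex-disjoint, each $D_i$ is exactly the subdigraph of $\bigcup_{j=1}^k D_j$ induced by $V(D_i)$; hence Proposition~\ref{prop:induced dim} gives $\dim(D_i)\le\dim(\bigcup_{j=1}^k D_j)$ for every $i\in[k]$, and taking the maximum over $i$ yields $d\le\dim(\bigcup_{j=1}^k D_j)$.

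For the upper bound, set $m:=2\lfloor (d+1)/2\rfloor$, the least even integer that is $\ge d$, and write $\ell:=m/2$. If $d=0$ then each $D_i$ is empty by Lemma~\ref{lem:no arc}, so $\bigcup_{j=1}^k D_j$ is empty and $\dim(\bigcup_{j=1}^k D_j)=0=m$; thus assume $d\ge 1$, so $\ell\ge 1$. By ($\star$), each $D_i$ is $\RR^m$-realizable, so fix an $\RR^m$-realizer $f_i\colon V(D_i)\to\RR^m$ of $D_i$. The idea is to translate the $k$ ``blocks'' far apart, but in a balanced way so that vertices from different blocks become incomparable. Concretely, let $M$ be a real number with $M>|[f_i(v)]_t|$ for all $i\in[k]$, $v\in V(D_i)$, $t\in[m]$, put $T_i:=2iM$, and define $g\colon V(\bigcup_{j=1}^k D_j)\to\RR^m$ by, for $v\in V(D_i)$,
\[
[g(v)]_t=\begin{cases}[f_i(v)]_t+T_i,&1\le t\le\ell,\\ [f_i(v)]_t-T_i,&\ell+1\le t\le m.\end{cases}
\]

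It then remains to check that $g$ is an $\RR^m$-realizer of $\bigcup_{j=1}^k D_j$. If $u,v$ lie in the same $V(D_i)$, the shift cancels in every coordinate, so $\GGG_{g(u)>g(v)}=\GGG_{f_i(u)>f_i(v)}$ and $\GGG_{g(v)>g(u)}=\GGG_{f_i(v)>f_i(u)}$, whence $g(u)\succ g(v)\iff f_i(u)\succ f_i(v)\iff(u,v)\in A(D_i)$, and $(u,v)\in A(D_i)\iff(u,v)\in A(\bigcup_{j=1}^k D_j)$ by vertex-disjointness. If $u\in V(D_i)$ and $v\in V(D_j)$ with, say, $i>j$, then $T_i-T_j\ge 2M$ dominates the bounded differences of the $f$-values, so $[g(u)]_t>[g(v)]_t$ for $1\le t\le\ell$ and $[g(u)]_t<[g(v)]_t$ for $\ell+1\le t\le m$; hence $|\GGG_{g(u)>g(v)}|=\ell=|\GGG_{g(v)>g(u)}|$, so $g(u)$ and $g(v)$ are incomparable and there is no arc between $u$ and $v$ in $\bigcup_{j=1}^k D_j$ either. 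Thus $\dim(\bigcup_{j=1}^k D_j)\le m=2\lfloor(d+1)/2\rfloor$.

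I expect the only real subtlety to be the parity bookkeeping: the coordinate set must split into two halves of equal size $\ell$, which is precisely what pushes the bound to $2\lfloor(d+1)/2\rfloor$ rather than $d$ when $d$ is odd, and the separating shift $T_i$ has to be simultaneously large enough (relative to all the fixed realizer entries) and balanced (up on half the coordinates, down on the other half) so that inter-block pairs come out incomparable while intra-block comparisons are left untouched. Everything else is routine verification.
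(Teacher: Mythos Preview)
Your proof is correct and follows essentially the same approach as the paper: both establish the lower bound via Proposition~\ref{prop:induced dim} and the upper bound by embedding each $D_i$ in $\RR^{2\lfloor(d+1)/2\rfloor}$ and then translating the blocks so that the first $\ell$ coordinates increase with the block index while the last $\ell$ coordinates decrease, making inter-block pairs incomparable while preserving intra-block comparisons. Your single-constant shift $T_i=2iM$ is a slightly cleaner packaging than the paper's inductive $\min/\max$ construction, but the idea is identical.
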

\begin{proof}
Let $\dim(D_i)=d_i$ for each $i \in [k]$.
Then there exists an $\RR^{d_i}$-realizer $f_i$ of $D_i$ for each $i \in [k]$.
Without loss of generality, we may assume $d_1$ is the maximum among $d_1$, $\ldots$, $d_k$.
Since $D_1$ is an induced subdigraph of $\bigcup_{i=1}^k D_i$, $$d_1 \le \dim(\bigcup_{i=1}^k D_i)$$ by Proposition~\ref{prop:induced dim}.

Suppose that $d_1 =0$.
Then $d_2 = \cdots = d_k =0$.
Therefore $D_i$ is an empty digraph for $i=1$, $\ldots$, $k$ by Lemma~\ref{lem:no arc} and so $D$ is an empty digraph.
Thus $\dim(D)=0$ by Lemma~\ref{lem:no arc}.

Now suppose that $d_1 \ge 1$.
To show $\dim(\bigcup_{i=1}^k D_i) \le 2\lfloor (d_1+1)/2 \rfloor$, we define a map $f_i^* : V(D_i) \to \RR^{2\lfloor (d_1+1)/2 \rfloor}$ for each $i \in [k]$ as follow.
When we refer to a component of $f_i$-value which does not really exist, it is assumed that $0$ is assigned to the component.
For example, when we mention $[f_1(v)]_{d_1+1}$ for a vertex $v \in V(D_1)$, we assume that the codomain of $f_1$ is extended to $\RR^{d_1+1}$ by adjoining a component with $0$ at the end of each $d_1$-tuple that is an $f_1$-value.

For notational convenience, let $\Gamma=\lfloor (d_1+1)/2 \rfloor$.

Now let $f_1^*=f_1$.
Then, inductively, for each $i =1$, $\ldots$, $k-1$ and for each $v \in V(D_{i+1})$, we let
$$[f_{i+1}^*(v)]_t =
\begin{cases}
  - m_{i+1,1} +
  M^*_{i,1}+[f_{i+1}(v)]_j+1 & \mbox{for each $t \in \{1, \ldots, \Gamma\}$}   \\
  -M_{i+1,2} + m^*_{i,2}+[f_{i+1}(v)]_j-1 & \mbox{for each $t \in \{\Gamma+1, \ldots, 2\Gamma\}$}
\end{cases}$$
where
\[
m_{i+1,1} = \min\{[f_{i+1}(u)]_j \mid u \in V(D_{i+1}), \text{  $j \in \{1, \ldots, \Gamma\}$}\},
\]
\[
M^*_{i,1} = \max\{[f_i^*(u)]_j \mid u \in V(D_{i}), \text{  $j \in \{1, \ldots, \Gamma\}$}\},
\]
\[
M_{i+1,2} = \max\{[f_{i+1}(u)]_j \mid u \in V(D_{i+1}), \text{  $j \in \{\Gamma+1, \ldots, 2\Gamma\}$}\},
\]
and
\[
m^*_{i, 2}=\min\{[f_i^*(u)]_j \mid u \in V(D_{i}), \text{ $j \in \{\Gamma+1, \ldots, 2\Gamma\}$}\}.
\]
Then, by the definitions of $m_{i+1,1}$, $M_{i+1,2}$, $M^*_{i,1}$, and $m^*_{i,2}$, given $u \in V(D_i)$ and $v \in V(D_{i+1})$,
\[
[f^*_{i+1}(v)]_t > M^*_{i,1} \ge [f^*_{i}(u)]_t
\] for each $i=1$, $\ldots$, $k-1$ and  $t=1$, $\ldots$, $\Gamma$ and
\[
[f^*_{i+1}(v)]_t < m^*_{i,2} \le [f^*_{i}(u)]_t
\] for each $i=1$, $\ldots$, $k-1$ and $t=\Gamma+1$, $\ldots$, $2\Gamma$. Then it follows that, for $u \in V(D_i)$ and $v \in V(D_{i'})$ where $i' > i$,
\begin{equation}\label{eqn:comparison}
[f^*_{i'}(v)]_t > [f^*_{i}(u)]_t \text{ for }t=1, \ldots, \Gamma; \ [f^*_{i'}(v)]_t < [f^*_{i}(u)]_t \text{ for }t=\Gamma+1, \ldots, 2\Gamma.
\end{equation}
Now we define a map $f: \bigcup_{i=1}^k V(D_i) \to \RR^{2\Gamma}$ by
\[f\vert_{V(D_i)}=f_i^*\]
for each $i=1$,$\ldots$, $k$ where $f\vert_{V(D_i)}$ denotes the restriction of $f$ to $V(D_i)$.
To show that $f$ is an $\RR^{2\Gamma}$-realizer of $\bigcup_{i=1}^k D_k$, we first take $u, v \in V(D_{i+1})$ for some $i \in \{1, \ldots, k-1\}$.
Since, given $i=1$, $\ldots$, $k-1$, $m_{i+1, 1}$, $M^*_{i,1}$, $M_{i+1, 2}$, and $m^*_{i,2}$ are constants with respect to vertices in $D_{i+1}$,
\[
[f_{i+1}^*(u)]_t > [f_{i+1}^*(v)]_t \Leftrightarrow [f_{i+1}(u)]_t > [f_{i+1}(v)]_t
\]
for  each $t=1$, $\ldots$, $2\Gamma$ and $u, v \in V(D_{i+1})$.
By definition, $[f_{i}^*(u)]_t=[f(u)]_t$ and $[f_{i}^*(v)]_t=[f(v)]_t$, so $$[f(u)]_t > [f(v)]_t \Leftrightarrow [f_{i}(u)]_t > [f_{i}(v)]_t$$ for each $i=1$, $\ldots$, $k$, each $t=1$, $\ldots$, $2\Gamma$, and $u, v \in V(D_i)$.
Thus we have shown that, for each $i=1$, $\ldots$, $k$ and $u, v \in V(D_i)$, $(u, v)$ is an arc in $\bigcup_{i=1}^k D_i$ if and only if $f(v) \prec f(u)$.
Hence, for two vertices $u$ and $v$ belonging to $D_i$ and $D_{i'}$, respectively, for $1 \le i < i' \le k$, $|\GGG_{f(u)>f(v)}|=|\GGG_{f(v)>f(u)}|=\Gamma$
by \eqref{eqn:comparison} and so we may conclude that $f$ is an $\mathbb{R}^{2\Gamma}$-realizer of $\bigcup_{i=1}^k D_k$.
\end{proof}

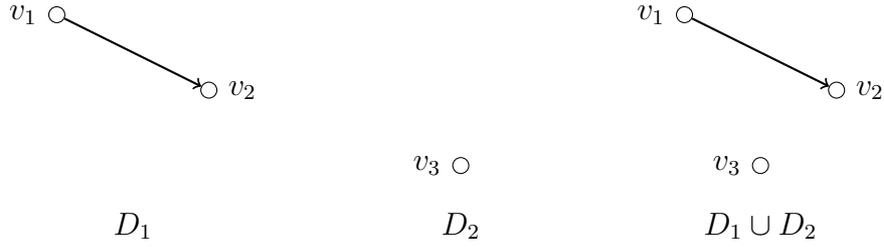
\begin{figure}
\begin{center}
\begin{tikzpicture}[x=1.0cm, y=1.0cm]

    \vertex (b2) at (0,3) [label=left:$v_1$]{};
    \vertex (b3) at (2,2) [label=right:$v_2$]{};


    \path
 (b2) edge [->,thick] (b3)

;
 \draw (1, 0.2) node{$D_1$};
\end{tikzpicture}
\qquad \qquad
\begin{tikzpicture}[x=1.0cm, y=1.0cm]

    \vertex (b4) at (0,1) [label=left:$v_3$]{};


 \draw (0, 0.2) node{$D_2$};
\end{tikzpicture}
\qquad \qquad
\begin{tikzpicture}[x=1.0cm, y=1.0cm]

      \vertex (b2) at (0,3) [label=left:$v_1$]{};
    \vertex (b3) at (2,2) [label=right:$v_2$]{};

    \vertex (b4) at (1,1) [label=left:$v_3$]{};


    \path

 (b2) edge [->,thick] (b3)
;

 \draw (1,0.2) node{$D_1 \cup D_2$}
	;
\end{tikzpicture}

\end{center}
\caption{The digraphs $D_1$, $D_2$, and $D_1 \cup D_2$ have weak majority dimensions $\dim(D_1)=1$, $\dim(D_2)=0$, and $\dim(D_1 \cup D_2)=2= 2\lfloor (\max\{\dim(D_1), \dim(D_2)\}+1)/2\rfloor$, respectively.
These digraphs tell us the upper bound, in Proposition~\ref{prop:disjoint union}, is sharp.}
\label{fig:construct2}
\end{figure}

The upper bound given in Proposition~\ref{prop:disjoint union} is sharp (see Figure~\ref{fig:construct2}).

The Erd\"{o}s-Szekeres lemma given in~\cite{erdos1935combinatorial} states that, for any positive integers $r$ and $s$, every sequence consisting of $rs+1$ distinct real numbers has an increasing subsequence of length $r+1$ or a decreasing subsequence of length $s+1$.
The following lemma is an immediate consequence of the Erd\"{o}s-Szekeres lemma.


\begin{Lem}[\cite{erdos1935combinatorial}]\label{lem:erdos}
Let $S$ be a subset of $\RR^2$ with $|S| = n^2+1$.
Then $S$ contains a chain or an anti-chain of size $n+1$.
\end{Lem}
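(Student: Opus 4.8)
The plan is to deduce this directly from the Erd\H{o}s--Szekeres lemma applied to the two coordinates of the points of $S$. First I would note that since $S\subseteq\RR^2$ has $n^2+1$ points, we want to produce $n+1$ of them forming a chain (pairwise comparable in the weak majority relation $\succ$ on $\RR^2$) or an anti-chain ($n+1$ pairwise incomparable points). For two points $u,v\in\RR^2$, comparability means $|\GGG_{u>v}|\neq|\GGG_{v>u}|$; in dimension $2$ this fails precisely when $|\GGG_{u>v}|=|\GGG_{v>u}|=1$, i.e.\ $u$ and $v$ are incomparable iff one strictly dominates in the first coordinate and the other strictly dominates in the second, or all coordinates tie.

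The main technical nuisance is that the points of $S$ need not have distinct coordinates, whereas the Erd\H{o}s--Szekeres lemma as usually stated concerns sequences of \emph{distinct} reals. So the first real step is a reduction to the ``generic'' case. I would argue that after a suitable perturbation — or, more cleanly, by passing to a total order on $\RR$ refining the relevant ties — we may assume the $n^2+1$ points have pairwise distinct first coordinates and pairwise distinct second coordinates, in such a way that the comparability structure we care about is preserved (ties only help make points incomparable, so breaking them by an infinitesimal perturbation cannot destroy an anti-chain and can only destroy a chain if a genuine tie was present; one has to check this does not cause trouble, which is why the careful setup matters). Once coordinates are distinct, list the points $p_1,\dots,p_{n^2+1}$ in increasing order of first coordinate, and look at the sequence of their second coordinates: by Erd\H{o}s--Szekeres (with $r=s=n$) this sequence of $n^2+1$ distinct reals contains an increasing subsequence of length $n+1$ or a decreasing subsequence of length $n+1$.

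Finally I would translate the two outcomes back into $\succ$. If the second coordinates are increasing along a subsequence of $n+1$ points, then along that subsequence \emph{both} coordinates increase, so any two of them $u,v$ satisfy $|\GGG_{u>v}|=2$, $|\GGG_{v>u}|=0$ (or vice versa), hence they are comparable — this gives a chain of size $n+1$. If instead the second coordinates are decreasing along a subsequence of $n+1$ points, then for any two points $u,v$ in it the first coordinate increases while the second decreases, so $|\GGG_{u>v}|=|\GGG_{v>u}|=1$ and they are incomparable — an anti-chain of size $n+1$. Either way $S$ contains a chain or an anti-chain of size $n+1$. The only step I expect to require genuine care is the distinct-coordinate reduction; the rest is a routine unwinding of the two-dimensional weak majority relation.
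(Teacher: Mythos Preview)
The paper does not prove this lemma at all: it is stated with a citation and introduced as ``an immediate consequence of the Erd\H{o}s--Szekeres lemma,'' so there is no argument in the paper to compare against. Your plan---sort by first coordinate, apply Erd\H{o}s--Szekeres to the sequence of second coordinates, and read increasing/decreasing subsequences back as chains/anti-chains---is exactly the standard derivation and is presumably what the authors had in mind. One clarification: the chain/anti-chain terminology here refers to the product partial order $\le_2$ on $\RR^2$ (it is introduced explicitly in the proof of Theorem~\ref{thm:has no upper bound}, where the lemma is applied), not to the weak majority relation $\succ$; for \emph{distinct} points of $\RR^2$ the two notions of comparability coincide, so your reading is harmless, but it is worth noting.

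The one genuinely shaky step is your handling of ties. Your parenthetical claim that ``ties only help make points incomparable'' is wrong in $\RR^2$: if $[u]_1=[v]_1$ and $[u]_2<[v]_2$ then $u$ and $v$ are comparable, and a careless perturbation of the first coordinate can flip this comparable pair to an incomparable one, destroying a chain. A cleaner fix avoids perturbation entirely: list the $n^2+1$ points with first coordinates non-decreasing, breaking ties by \emph{increasing} second coordinate, and apply the variant of Erd\H{o}s--Szekeres (same proof) that yields a non-decreasing subsequence of length $n+1$ or a strictly decreasing one. A non-decreasing run of second coordinates then gives a $\le_2$-chain directly; a strictly decreasing run forces strict increase in the first coordinates as well (by the tie-breaking rule), hence an anti-chain. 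Alternatively, the statement is just Mirsky's theorem for the finite poset $(S,\le_2)$, which sidesteps the distinctness issue altogether.
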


A digraph $D$ is said to be \emph{transitive} if $(x, z) \in A(D)$ whenever $(x,y) \in A(D)$ and $(y,z) \in A(D)$.

It is known that a majority dimension can be arbitrarily large, which was shown by a probabilistic method.
We give a stronger version which shows that a weak majority dimension can also be arbitrarily large.
By the way, we prove it by constructing a transitive digraph with the weak majority dimension greater than $d$ for each positive integer $d$.
Our result is also meaningful in a following sense.
In 1941, Dushnik and Miller showed that a digraph has majority dimension at most two if and only if it is transitive and its incomparability graph is transitive orientable \cite{bachmeier2017k}.
By looking at their result, one might think that a transitive digraph has a small majority dimension.
Yet, our result shows that a transitive digraph might have an arbitrarily large weak majority dimension, which implies that a transitive digraph might have an arbitrarily large majority dimension.

\begin{Thm}\label{thm:has no upper bound}
There is a transitive digraph $D$ such that $\dim(D) > d$ for a positive integer $d$.
\end{Thm}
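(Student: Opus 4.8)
The plan is to construct, for each positive integer $d$, a transitive digraph $D=D(d)$ that admits no $\RR^d$-realizer; since the weak majority dimension of every digraph is well-defined (see the Remark of Section~1), this forces $\dim(D)>d$. By $(\star)$ it suffices to rule out $\RR^d$-realizers for \emph{even} $d$, so fix $d=2k$. Choose an integer $m=m(d)$ (to be fixed at the end) and let $N=N(m,d)$ be large enough that the extraction described below survives (a tower-type bound, e.g.\ $N=m^{4^{k}}$, is more than enough). The digraph $D$ is to be built on a ground set of more than $N$ vertices, with its arcs chosen so that (i) $D$ is transitive and (ii) $D$ has no induced subdigraph on more than $m_{0}(d)$ vertices that is realizable as a weighted weak majority of at most $k$ linear orders; here a \emph{weighted weak majority} of linear orders $L_{1},\dots,L_{k}$ with (small positive integer) weights $c_{1},\dots,c_{k}$ is the transitive digraph whose arcs are the pairs $(u,v)$ with $\sum_{i:\,u>_{L_i}v}c_i-\sum_{i:\,v>_{L_i}u}c_i>0$. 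Property (i) will be immediate from the definition of $D$.

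Now suppose for contradiction that $f:V(D)\to\RR^{d}$ is an $\RR^{d}$-realizer. Split the $d=2k$ coordinates into the pairs $\{1,2\},\{3,4\},\dots,\{2k-1,2k\}$ and process them one at a time: set $W_{0}=V(D)$, and given $W_{j-1}$ look at the images of $W_{j-1}$ under the $j$-th coordinate pair and apply Lemma~\ref{lem:erdos}, preceded by a pigeonhole step that absorbs vertices sharing a common image in that pair (such vertices automatically form an antichain there), to obtain $W_{j}\subseteq W_{j-1}$, of size polynomial in $|W_{j-1}|$, on which $f$ restricted to the $j$-th coordinate pair is a chain or an antichain. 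After $k$ rounds we reach $W:=W_{k}$ with $|W|\ge m$ on which $f$ is a chain or an antichain in each of the $k$ coordinate pairs. For $u,v\in W$, a coordinate pair on which $f$ is an antichain contributes $0$ to $|\GGG_{f(u)>f(v)}|-|\GGG_{f(v)>f(u)}|$, while a coordinate pair on which $f$ is a chain contributes a value in $\{-2,-1,0,1,2\}$ whose sign is governed by a single fixed linear order on $W$. Hence $D[W]$ — which is transitive, being an induced subdigraph of the transitive digraph $D$ — is realizable as a weighted weak majority of at most $k=d/2$ linear orders; since $|W|\ge m>m_{0}(d)$, this contradicts property (ii). (Note that merely having $\dim(D[W])\le d/2<d$ would contradict nothing, in view of Proposition~\ref{prop:induced dim}; it is precisely the structural restriction (ii) that is exploited.)

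The principal obstacle is meeting (ii): one must exhibit a \emph{single} large transitive digraph all of whose sizeable induced subdigraphs fail to be weighted weak majorities of few linear orders. Proposition~\ref{prop:disjoint union} already shows that disjoint unions of small pieces cannot achieve this, so the construction must be more tightly knit; a natural route is a recursive or gadget-based construction accompanied by an inductive argument showing that deleting any bounded set of vertices still leaves a digraph not realizable with at most $k$ effective ``directions''. The remaining points are routine by comparison: verifying transitivity of the explicit $D$; confirming $D$ is $\RR^{r}$-realizable for some $r$ (automatic from well-definedness); the bookkeeping in the iterated use of Lemma~\ref{lem:erdos} and the pigeonhole for coincident images; and checking the contribution computation for chain versus antichain coordinate pairs.
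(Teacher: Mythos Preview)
Your proposal outlines a plausible strategy but leaves the heart of the argument undone. You yourself flag the gap: you need a transitive digraph $D$ no large induced subdigraph of which is a weighted weak majority of at most $k$ linear orders, and you do not construct one --- you only suggest that ``a recursive or gadget-based construction'' should work. That is the entire difficulty; the iterated Erd\H{o}s--Szekeres reduction to $k$ effective directions is standard, so the burden of the proof falls precisely on the step you have postponed. There is also a circularity risk: exhibiting a transitive digraph none of whose large induced subdigraphs is realizable with few effective directions is at least as hard as the theorem itself, and absent an explicit construction the plan does not obviously terminate. A secondary issue: on a chain coordinate pair the contribution to the margin is not $\pm c_i$ for a fixed $c_i$ but can take any value in $\{-2,-1,0,1,2\}$ depending on ties in each coordinate, so $D[W]$ is not literally a weighted weak majority of $k$ strict linear orders; the pigeonhole step you allude to would have to be worked out, and it interacts with the bounds you need on $|W|$.

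The paper avoids all of this by giving an explicit digraph: with $r=(2d)^{2^{d-1}}+1$, take vertices $[r]\cup\{S\subset[r]:|S|=d+1\}$ and arcs $(i,S)$ for $i\in S$. This is vacuously transitive (bipartite, all arcs one way). Assuming an $\RR^{d}$-realizer $f$, iterated Erd\H{o}s--Szekeres (pairing each coordinate with the first) extracts $2d+1$ indices, relabelled $1,\dots,2d+1$, on which $f$ is coordinatewise monotone. Set $S^{*}=\{1,3,\dots,2d+1\}$ and partition $[d]$ into $J_{1}=\GGG_{f(1)>f(2d+1)}$, $J_{2}=\GGG_{f(2d+1)>f(1)}$, and the rest $J_{3}$; incomparability of $1$ and $2d+1$ forces $|J_{1}|=|J_{2}|\le d/2$. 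The $J_{1}$-margins $p_{1,2i-1}$ take at most $|J_{1}|+1<d+1$ values, so two consecutive odd indices share a value; the even index sandwiched between them then has positive total margin against $S^{*}$, contradicting $2i^{*}\notin S^{*}$. In short, the paper supplies the concrete object \emph{and} the combinatorial contradiction that your plan identifies as ``the principal obstacle'' but does not resolve.
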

\begin{proof}
Let $r=(2d)^{2^{d-1}}+1$ for a sufficiently large integer $d$.
Let $D$ be a digraph with the vertex set
$$ V(D) =[r] \cup \{S \mid S\subset [r] \text{ and } |S|=d+1\}$$
and the arc set
$$A(D) = \left\{(i, S) \mid  i \in S, S\subset [r] \text{ and } |S|=d+1 \right\}.$$
Then it is vacuously true that $D$ is transitive.
In the following, we shall show $\dim(D) > d$.

Suppose, to the contrary, that $\dim(D) \le d$.
Then, by ($\star$), there is an $\RR^d$-realizer of $D$.
Let $f$ be an $\RR^d$-realizer of $D$.
Without loss of generality, we may assume
\begin{equation}\label{eqn:chain}
[f(1)]_1 \ge \cdots \ge [f(r)]_1.
\end{equation}

We endow $\RR^2$ with a partial order $\le_{2}$ defined by $(x_1, x_2) \le_{2} (y_1, y_2)$ if and only if $x_1 \le y_1$ and $x_2 \le y_2$.
Now consider the set $\{([(f(i)]_1, [f(i)]_2) \mid i \in [r]\}$, which is a subset of $\RR^2$.
Since the set has size $r=(2d)^{2^{d-1}}+1$, there exists a subset $I_2 \subset \left[r \right]$ of size $(2d)^{2^{d-2}}+1$ such that $\{([f(i)]_1, [f(i)]_2) \mid i \in I_2 \}$ forms a chain or an anti-chain by Lemma~\ref{lem:erdos}.
Then, by applying Lemma~\ref{lem:erdos} to the set $\{([f(i)]_1, [f(i)]_3) \mid i \in I_2 \}$, there exists a subset $I_{3} \subset I_2$ of size $(2d)^{2^{d-3}}+1$ such that $\{([f(i)]_1, [f(i)]_3) \mid i \in I_{3} \}$ forms a chain or an anti-chain.
We apply Lemma~\ref{lem:erdos} repeatedly in this way until we obtain a set $I_d$ of size $(2d)^{2^0}+1 = 2d+1$ so that $I_d \subset I_{d-1} \subset \cdots \subset I_2 \subset [r]$ and  $\{([f(i)]_1, [f(i)]_t) \mid i \in I_t \}$ forms a chain or an anti-chain in $\RR^2$ for each $t=2$, $3$, $\ldots$, $d$.
We may regard $I_d = [2d+1]$.

Since $\{([f(i)]_1, [f(i)]_t) \mid i \in I_t \}$ forms a chain or an anti-chain for each $t=2$, $\ldots$, $d$, by \eqref{eqn:chain},
\begin{equation}\label{eqn:chain2}
 [f(1)]_t \ge \cdots \ge [f({2d+1})]_t \quad \text{ or }  \quad [f(1)]_t \le \cdots \le [f({2d+1})]_t
\end{equation}
for each $t \in [d]$.

Let $J_1=\GGG_{f(1)>f({2d+1})}$, $J_2=\GGG_{f(2d+1)>f({1})}$, and  $J_3=[d] \setminus (J_1 \cup J_2)$.
Since the vertices $1$ and $2d+1$ are not adjacent in $D$ by definition, $f(1)$ and $f(2d+1)$ are incomparable, and so $|J_1|=|J_2|$.
Then $|J_1|=k$ for some nonnegative integer $k \in \{0, 1, \ldots, \left\lfloor d /2 \right\rfloor \}$.
Then, by definition, $[f(1)]_t = [f({2d+1})]_t$ for each $t \in J_3$.
Thus, by \eqref{eqn:chain2},
\begin{equation}\label{eqn:chain equal J_3}
[f(1)]_t = \cdots = [f({2d+1})]_t
\end{equation}
for each $t \in J_3$.

Now let $S^* = \{1, 3, \ldots, 2i+1, \ldots, 2d+1\}$. Then $S^* \in V(D)$. Let
$$p_{j,i} = |\GGG_{f(i)>f(S^*)} \cap J_j| - |\GGG_{f(S^*)>f(i)}\cap J_j|$$
for each $i \in [2d+1]$ and each $j \in [3]$.
By \eqref{eqn:chain2} and the definition of $J_j$, $|\GGG_{f(i_1)>f(S^*)} \cap J_j| \ge |\GGG_{f(i_2)>f(S^*)} \cap J_j|$ if and only if $|\GGG_{f(S^*)>f(i_1)} \cap J_j| \le |\GGG_{f(S^*)>f(i_2)} \cap J_j|$ for $1 \le i_1, i_2 \le 2d+1$ and $j \in [3]$.
Thus $p_{j, i_1} \ge p_{j, i_2}$ if and only if $|\GGG_{f(i_1)>f(S^*)} \cap J_j| \ge |\GGG_{f(i_2)>f(S^*)} \cap J_j|$ for $1 \le i_1,  i_2 \le 2d+1$ and $j \in [3]$.

Since the  former inequalities in \eqref{eqn:chain2} hold for $t \in J_1$, that is, $[f(q)]_t \ge [f(r)]_t$ for $1 \le q < r \le 2d+1$ and $t \in J_1$.
Thus, for $1 \le q < r \le 2d+1$ and $t \in J_1$,
if $[f(r)]_t \ge [f(S^*)]_t$, then  $[f(q)]_t \ge [f(S^*)]_t$ and so
\begin{equation}\label{eqn:p}
p_{1,1}  \ge p_{1,2}  \ge \cdots \ge p_{1, 2d+1}.
\end{equation}
Similarly, we may show that
\begin{equation}\label{eqn:qr}
p_{2,1} \le p_{2, 2} \le \cdots \le p_{2, 2d+1} \quad \text{and} \quad p_{3,1}= p_{3,2}=\cdots = p_{3,2d+1}.
\end{equation}
Since $\{J_1,J_2,J_3\}$ is a partition of $[d]$,
\begin{equation} \label{eqn:partition}
p_{1,2i-1} + p_{2, 2i-1} + p_{3, 2i-1}=|\GGG_{f(2i-1) > f(S^*)}| - |\GGG_{f(S^*) > f(2i-1)}|
\end{equation}
for each $i \in [d+1]$. Since $({2i-1}, S^*) \in A(D)$,
$$|\GGG_{f(2i-1) > f(S^*)}| - |\GGG_{f(S^*) > f(2i-1)}|>0$$
for each $i \in [d+1]$. Thus
\begin{equation} \label{eqn:one}
p_{1,2i-1} + p_{2, 2i-1} + p_{3, 2i-1} > 0
\end{equation} for each $i \in [d+1]$.

By the way, since $|J_1|=k$, $p_{1,2i-1} \in \{k, k-2, \ldots, -(k-2), -k\}$ for each $i \in [d+1]$.
Therefore there are at most $k+1$ values which are available for $p_{1,2i-1}$ for each $i \in [d+1]$. Yet, $k+1 \le {{d}\over{2}}+1 < d+1$. Thus, by the Pigeonhole principle, there are  $i^*$ and $j^*$ in $[d+1]$ with $j^*>i^*$ such that $p_{1, 2i^*-1}=p_{1, 2j^*-1}$.
By \eqref{eqn:p}, we may assume that $j^*=i^*+1$, i.e.
$p_{1, 2i^*-1}=p_{1, 2i^*+1}$. Then, by \eqref{eqn:p} again, $p_{1, 2i^*-1}=p_{1,2i^*}$. Therefore, by \eqref{eqn:qr} and \eqref{eqn:one},  $$p_{1, 2i^*} + p_{2, 2i^*}+ p_{3, 2i^*} \ge p_{1, 2i^*-1} + p_{2, 2i^*-1}+ p_{3, 2i^*-1} > 0.$$
Hence, by \eqref{eqn:partition}, $|\GGG_{f({2i^*}) > f(S^*)}| - |\GGG_{f({S^*}) > f(2i^*)}| >0$ and so $({2i^*}, S^*) \in A(D)$, which contradicts the definition of $D$.
\end{proof}


\section{Digraph classes with weak majority dimensions at most two}

Now we give necessary and sufficient conditions for a digraph having a weak majority dimension zero and one and  a necessary condition for a digraph having a majority dimension two.

For a digraph $D$, two vertices $u$ and $v$ of $D$ are said to be \emph{homogeneous}, denoted by $u \sim v$, if $N_D^+(u)=N_D^+(v)$ and $N_D^-(u)=N_D^-(v)$ where $N_D^+(w)$ and $N_D^-(w)$ for a vertex $w$ of $D$ stand for the set of out-neighbors and the set of in-neighbors, respectively, of $w$.
Clearly $\sim$ is an equivalence relation on $V(D)$.
We denote the equivalence class containing a vertex $u$ of $D$ by $[u]$.
Let $u_1$, $\ldots$, $u_l$ be vertices of $D$ such that $\{[u_1], \ldots, [u_l] \}$ be a partition of $V(D)$ under the relation $\sim$.
Let $D^*$ be the subdigraph of $D$ induced by $\{u_1, \ldots, u_l\}$.
If $v_1$, $\ldots$, $v_l$ are representatives of $[u_1]$, $\ldots$, $[u_l]$, respectively, then the subdigraph of $D$ induced by $\{v_1, \ldots, v_l\}$ is isomorphic to $D^*$.
In this context, we call a subdigraph of $D$ isomorphic to $D^*$ a \emph{maximally condensed subdigraph} of $D$.
%

\begin{Prop}\label{prop:consider only condensation}
For a digraph $D$ and a maximally condensed subdigraph $D^*$ of $D$, $\dim(D) = \dim(D^*)$.
\end{Prop}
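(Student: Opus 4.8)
The plan is to establish the two inequalities $\dim(D^*)\le\dim(D)$ and $\dim(D)\le\dim(D^*)$ separately. The first is immediate from the set-up preceding the statement: $D^*$ is (by construction) the subdigraph of $D$ induced by a set $\{v_1,\dots,v_l\}$ of representatives of the equivalence classes $[u_1],\dots,[u_l]$ of the relation $\sim$, hence $D^*$ is an induced subdigraph of $D$, and Proposition~\ref{prop:induced dim} gives $\dim(D^*)\le\dim(D)$. (Here $\dim(D^*)$ is finite because the weak majority dimension is well defined, as recorded after the definition; alternatively one may iterate Proposition~\ref{prop:add arc}.)

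For the reverse inequality, fix such representatives $v_1,\dots,v_l$ so that $D^*$ is the subdigraph of $D$ induced by $\{v_1,\dots,v_l\}$, set $d=\dim(D^*)$, and let $f^*\colon\{v_1,\dots,v_l\}\to\RR^d$ be an $\RR^d$-realizer of $D^*$. I would then define $f\colon V(D)\to\RR^d$ by $f(w)=f^*(v_j)$ whenever $w\in[v_j]$; this is well defined since $\{[v_1],\dots,[v_l]\}$ partitions $V(D)$. The task is to verify that $f$ is an $\RR^d$-realizer of $D$, i.e.\ that $(w_1,w_2)\in A(D)\Leftrightarrow f(w_1)\succ f(w_2)$ for all $w_1,w_2\in V(D)$.

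I split into two cases. If $w_1,w_2\in[v_j]$ lie in the same class, then $f(w_1)=f(w_2)=f^*(v_j)$, so $f(w_1)$ and $f(w_2)$ are incomparable in $(\RR^d,\succ)$; on the other hand $w_1\sim w_2$ gives $N_D^+(w_1)=N_D^+(w_2)$, so an arc $(w_1,w_2)$ would force $w_2\in N_D^+(w_2)$, a loop, contradicting the simplicity of the underlying graph of $D$. Hence both sides of the biconditional are false. If instead $w_1\in[v_j]$ and $w_2\in[v_{j'}]$ with $j\ne j'$, then using $w_1\sim v_j$ and $w_2\sim v_{j'}$ I would chase the equivalences
$(w_1,w_2)\in A(D)\Leftrightarrow w_2\in N_D^+(w_1)=N_D^+(v_j)\Leftrightarrow(v_j,w_2)\in A(D)\Leftrightarrow v_j\in N_D^-(w_2)=N_D^-(v_{j'})\Leftrightarrow(v_j,v_{j'})\in A(D^*)$,
and since $f^*$ realizes $D^*$ this last condition is equivalent to $f^*(v_j)\succ f^*(v_{j'})$, i.e.\ $f(w_1)\succ f(w_2)$. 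Thus $f$ is an $\RR^d$-realizer of $D$, so $\dim(D)\le d=\dim(D^*)$, and combined with the first inequality this yields $\dim(D)=\dim(D^*)$.

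The argument is essentially bookkeeping with the definition of $\sim$, so I do not anticipate a real obstacle; the one point needing care is the same-class case, where the simplicity of the underlying graph must be invoked to rule out an arc between two homogeneous vertices, which is what makes assigning them identical vectors consistent with realizability.
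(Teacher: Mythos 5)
Your proposal is correct and follows essentially the same route as the paper: the lower bound via Proposition~\ref{prop:induced dim}, and the upper bound by extending a realizer of $D^*$ to $D$ by assigning each vertex the vector of its class representative. Your explicit treatment of the same-class case (ruling out an arc between homogeneous vertices via loop-freeness) is a detail the paper's equivalence chain glosses over, but it is the same argument.
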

\begin{proof}
Since $D^*$ is an induced subdigraph of $D$, $$\dim(D) \ge \dim(D^*)$$
by Proposition~\ref{prop:induced dim}.
To show the $\dim(D) \le \dim(D^*)$, let $d=\dim(D^*)$ and $f$ be an $\RR^d$-realizer of $D^*$.
We define a map $g:V(D) \to \RR^d$ as follows.
For a vertex $u \in V(D)$, there exists a vertex $v \in V(D^*)$ such that $u \sim v$, and we let
 $g(u)=f(v)$.
Then the map $g$ is obviously well-defined.
To show that $g$ is an $\RR^d$-realizer of $D$, we take two vertices $u$ and $v$ in $D$.
Then there are vertices $u^*$ and $v^*$ in $D^*$ such that $u \sim u^*$ and $v \sim v^*$.
Now
\[
  (u, v) \in A(D) \Leftrightarrow (u^*, v^*) \in A(D^*) \Leftrightarrow f(v^*) \prec f(u^*)
  \Leftrightarrow g(v) \prec g(u).
\]
Hence the map $g$ is an $\RR^d$-realizer of $D$ and so $\dim(D) \le \dim(D^*)$.
\end{proof}

By the above proposition, it is sufficient to consider a maximally condensed subdigraph of a digraph to find its weak majority dimension.


\begin{Prop}\label{prop:dim1}
For a digraph $D$ and a maximally condensed subdigraph $D^*$ of $D$, $\dim(D) = 0$ if and only if $D^*$ is a trivial digraph;  $\dim(D) = 1$ if and only if  $D^*$ is a nonempty acyclic tournament, that is, $A(D^*)$ is a total order on $V(D^*)$.
\end{Prop}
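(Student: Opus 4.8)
The plan is to reduce everything to the maximally condensed subdigraph $D^{*}$ via Proposition~\ref{prop:consider only condensation}, which gives $\dim(D)=\dim(D^{*})$, and then to exploit that on $\RR^{1}$ the weak majority relation is just the usual strict order: for $x,y\in\RR$ one has $|\GGG_{y>x}|-|\GGG_{x>y}|>0$ precisely when $y>x$. Hence I will use throughout that an $\RR^{1}$-realizer of a digraph $E$ is exactly a map $f\colon V(E)\to\RR$ with $(x,y)\in A(E)\Leftrightarrow f(x)>f(y)$.

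For the dimension-$0$ statement I would argue: by Lemma~\ref{lem:no arc}, $\dim(D^{*})=0$ iff $D^{*}$ is empty. If $D^{*}$ is empty but has two distinct vertices $u,v$, then $N^{+}_{D^{*}}(u)=N^{-}_{D^{*}}(u)=\emptyset=N^{+}_{D^{*}}(v)=N^{-}_{D^{*}}(v)$, so $u\sim v$, contradicting the fact that a maximally condensed subdigraph has pairwise non-homogeneous vertices; thus $D^{*}$ is trivial. Conversely a trivial digraph is empty, so $\dim(D)=\dim(D^{*})=0$. (It is worth noting in passing that $D$ is empty iff $D^{*}$ is empty, since an arc $(u,v)$ of $D$ forces $u\not\sim v$ and descends to an arc of $D^{*}$ between the classes $[u]$ and $[v]$, but the argument via Lemma~\ref{lem:no arc} already suffices.)

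For the dimension-$1$ statement I would prove both implications. If $D^{*}$ is a nonempty acyclic tournament, list $V(D^{*})=\{w_{1},\dots,w_{n}\}$ with $(w_{i},w_{j})\in A(D^{*})\Leftrightarrow i<j$ and set $f(w_{i})=-i$; this $f$ realizes $D^{*}$ in $\RR^{1}$, so $\dim(D^{*})\le 1$, while $\dim(D^{*})\ge 1$ by Lemma~\ref{lem:no arc} since $D^{*}$ is nonempty, giving $\dim(D^{*})=1$. Conversely, suppose $\dim(D^{*})=1$ and take an $\RR^{1}$-realizer $f\colon V(D^{*})\to\RR$. The crucial step is that $f$ must be injective: if $f(u)=f(v)$ with $u\ne v$, then for every vertex $w$ the equivalences $f(u)>f(w)\Leftrightarrow f(v)>f(w)$ and $f(w)>f(u)\Leftrightarrow f(w)>f(v)$ yield $N^{+}_{D^{*}}(u)=N^{+}_{D^{*}}(v)$ and $N^{-}_{D^{*}}(u)=N^{-}_{D^{*}}(v)$, i.e.\ $u\sim v$, impossible in a maximally condensed subdigraph. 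Once $f$ is injective, for any two distinct $u,v$ exactly one of $f(u)>f(v)$, $f(v)>f(u)$ holds, so exactly one of $(u,v),(v,u)$ lies in $A(D^{*})$; hence $D^{*}$ is a tournament with $A(D^{*})=\{(u,v)\mid f(u)>f(v)\}$, which is a strict total order, so $D^{*}$ is an acyclic tournament, and it is nonempty because $\dim(D^{*})\ne 0$.

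The only genuinely delicate ingredient is the repeated use of the maximal-condensation hypothesis — first to upgrade ``empty'' to ``trivial'', and then to force any $\RR^{1}$-realizer to be injective; the rest is a routine translation between $\RR^{1}$-realizability and the existence of a compatible strict linear order, together with Lemma~\ref{lem:no arc} and Proposition~\ref{prop:consider only condensation}.
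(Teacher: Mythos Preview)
Your proof is correct and follows essentially the same approach as the paper's: reduce to $D^{*}$ via Proposition~\ref{prop:consider only condensation}, invoke Lemma~\ref{lem:no arc} for the dimension-$0$ case, and for dimension~$1$ establish injectivity of any $\RR^{1}$-realizer from the pairwise non-homogeneity of vertices in $D^{*}$ to conclude that $D^{*}$ is a nonempty acyclic tournament. You spell out two steps the paper leaves implicit --- why an empty maximally condensed subdigraph must be trivial, and why $f(u)=f(v)$ forces $u\sim v$ --- but these are elaborations of the same argument rather than a different route.
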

\begin{proof}
By Lemma~\ref{lem:no arc} and Proposition~\ref{prop:consider only condensation}, $\dim(D) = 0$ if and only if $D^*$ is a trivial digraph.

Now we show that $\dim(D) = 1$ if and only if  $D^*$ is a nonempty acyclic tournament.
By Proposition~\ref{prop:consider only condensation}, it is sufficient to show that $\dim(D^*)=1$ if and only if  $D^*$ is a nonempty acyclic tournament.
Let $V(D^*)=\{u_1, u_2, \ldots, u_n\}$.
Suppose that $\dim(D^*)=1$.
Then $D^*$ is nonempty by Lemma~\ref{lem:no arc}, so $n \ge 2$.
Moreover, there is an $\RR$-realizer $f$ of $D^*$.
For $1 \le i < j \le n$, since $u_i$ and $u_j$ belong to distinct equivalence classes under $\sim$,  $f(u_i) \neq f(u_j)$.
Then, without loss of generality, we may assume that $f(u_i) < f(u_j)$ for any $1 \le i < j \le n$.
By the definition of realizer, $(u_j, u_i) \in A(D^*)$ for any $1 \le i <j \le n$, and so $D^*$ is an acyclic tournament.
Therefore the ``only if'' part is true.

Suppose that $D^*$ is a nonempty acyclic tournament.
Then, since an acyclic digraph has an acyclic labeling, without loss of generality, we may assume that $(u_j, u_i) \in A(D^*)$ for any $1 \le i < j \le n$.
Now we define a map $g:V(D^*) \to \RR$ so that $g(u_i)=i$ for any $i \in [n]$.
Then it is easy to check that $g$ is an $\RR$-realizer of $D^*$ and so $\dim(D^*) \le 1$.
By the way, since $D^*$ is nonempty, $\dim(D^*) \ge 1$ by Lemma~\ref{lem:no arc}.
Hence we may conclude that $\dim(D^*)=1$ and so the ``if'' part is true.
\end{proof}

In the rest of this section, we give a necessary condition for a digraph having a weak majority dimension two.
We need the following lemma.

\begin{Lem}\label{lem:directed path of length two}
A directed path of length two has the weak majority dimension three.
\end{Lem}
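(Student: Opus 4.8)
The plan is to prove $\dim(P) \le 3$ by exhibiting an explicit realizer and $\dim(P) \ge 3$ by ruling out $\RR^d$-realizability for $d \in \{0,1,2\}$. Throughout, write $P$ for the directed path $v_1 \to v_2 \to v_3$, so that $A(P) = \{(v_1,v_2),(v_2,v_3)\}$ and, crucially, $(v_1,v_3) \notin A(P)$. Note that Proposition~\ref{prop:add arc} applied to $P$ and the arc $(v_2,v_3)$ only yields $\dim(P) \le \dim(P-(v_2,v_3)) + 2 \le 4$ (the digraph $P-(v_2,v_3)$ has a single arc, hence dimension exactly $2$ by Lemmas~\ref{lem:no arc} and~\ref{lem:1 arc dim} together with Proposition~\ref{prop:dim1}), so a direct construction is needed for the upper bound.

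For the upper bound, I would define $f \colon V(P) \to \RR^3$ by $f(v_1) = (2,2,0)$, $f(v_2) = (1,1,1)$, and $f(v_3) = (3,0,0)$. Then $\GGG_{f(v_1)>f(v_2)} = \{1,2\}$ and $\GGG_{f(v_2)>f(v_1)} = \{3\}$, so $f(v_1) \succ f(v_2)$; next $\GGG_{f(v_2)>f(v_3)} = \{2,3\}$ and $\GGG_{f(v_3)>f(v_2)} = \{1\}$, so $f(v_2) \succ f(v_3)$; finally $\GGG_{f(v_1)>f(v_3)} = \{2\}$ while $\GGG_{f(v_3)>f(v_1)} = \{1\}$, so $f(v_1)$ and $f(v_3)$ are incomparable. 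Hence $f$ is an $\RR^3$-realizer of $P$ and $\dim(P) \le 3$.

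For the lower bound I would show $\dim(P) \notin \{0,1,2\}$. Since $P$ is nonempty, $\dim(P) \ge 1$ by Lemma~\ref{lem:no arc}. No two vertices of $P$ are homogeneous (their ordered pairs of out- and in-neighborhoods are pairwise distinct), so $P$ is its own maximally condensed subdigraph; as $v_1$ and $v_3$ are nonadjacent, $A(P)$ is not a total order on $V(P)$, and hence $\dim(P) \ne 1$ by Proposition~\ref{prop:dim1}. The main point is $\dim(P) \ne 2$. Suppose, to the contrary, that $f$ is an $\RR^2$-realizer of $P$. The key observation is that in $\RR^2$ the relation $x \succ y$ forces $|\GGG_{y>x}| = 0$: indeed $|\GGG_{x>y}| + |\GGG_{y>x}| \le 2$ together with $|\GGG_{x>y}| > |\GGG_{y>x}|$ is impossible once $|\GGG_{y>x}| \ge 1$; thus $x \succ y$ implies $[x]_i \ge [y]_i$ for $i = 1,2$ with strict inequality for at least one index. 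Applying this to $f(v_1) \succ f(v_2)$ and to $f(v_2) \succ f(v_3)$ gives $[f(v_1)]_i \ge [f(v_2)]_i \ge [f(v_3)]_i$ for $i = 1,2$, and since $|\GGG_{f(v_1)>f(v_2)}| \ge 1$ there is a coordinate $k$ with $[f(v_1)]_k > [f(v_2)]_k \ge [f(v_3)]_k$. Therefore $|\GGG_{f(v_1)>f(v_3)}| \ge 1$ and $|\GGG_{f(v_3)>f(v_1)}| = 0$, so $f(v_1) \succ f(v_3)$ and $(v_1,v_3) \in A(P)$, a contradiction. Hence $\dim(P) \ge 3$, and combined with the upper bound, $\dim(P) = 3$.

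I expect the only subtlety to be the realization that Proposition~\ref{prop:add arc} is too weak to give the sharp upper bound, so the ad hoc three-dimensional assignment must be produced and checked by hand; the lower bound is then a short, case-free argument resting on the collapse of the weak majority relation to coordinatewise domination in dimension two (and on Proposition~\ref{prop:dim1} for dimension one).
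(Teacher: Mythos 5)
Your proof is correct and follows essentially the same route as the paper's: an explicit $\RR^3$-realizer for the upper bound (your vectors check out, as do the paper's different ones), and for the lower bound the observation that in $\RR^2$ the relation $f(v_1)\succ f(v_2)\succ f(v_3)$ collapses to coordinatewise domination, forcing $f(v_1)$ and $f(v_3)$ to be comparable. The only cosmetic difference is that the paper dispatches the cases $d=0,1$ in one stroke via ($\star$) rather than citing Lemma~\ref{lem:no arc} and Proposition~\ref{prop:dim1} separately.
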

\begin{proof}
Let $P:=v_1 \rightarrow v_2 \rightarrow v_3$ be a directed path of length two and $f:V(P) \to \RR^3$ be a map defined by
$$
\text{$f(v_1)=(1,2,3)$, $f(v_2)=(3,1,2)$, and $f(v_3)=(2,0,3)$.}
$$
Then it is easy to see that $f$ is an $\RR^3$-realizer of $P$ and so $\dim(P) \le 3$.

Suppose, to the contrary, that $\dim(P) \le 2$.
Then there is an $\RR^2$-realizer $g$ of $P$ by ($\star$).
Since $v_1$ and $v_3$ are not adjacent in $D$, $g(v_1)$ and $g(v_3)$ are incomparable, and so
\begin{equation}\label{eqn:path1}
|\GGG_{g(v_1) > g(v_3)}| = |\GGG_{g(v_3) > g(v_1)}|.
\end{equation}
In addition, since $(v_1, v_2) \in A(P)$ and $(v_2, v_3) \in A(P)$,
\begin{equation}\label{eqn:path2}
\text{$|\GGG_{g(v_1) > g(v_2)}| - |\GGG_{g(v_2) > g(v_1)}| >0$ and $|\GGG_{g(v_2) > g(v_3)}| - |\GGG_{g(v_3) > g(v_2)}| >0$.}
\end{equation}

Since $g(v_1)$, $g(v_2)$, and $g(v_3)$ belong to $\RR^2$, $|\GGG_{g(v_i) > g(v_{i+1})}|+|\GGG_{g(v_{i+1}) > g(v_i)}| \le 2$ for each $i=1, 2$.
By \eqref{eqn:path2}, $|\GGG_{g(v_{i+1}) > g(v_{i})}|=0$ for each $i=1, 2$.
Therefore
\begin{equation}\label{eqn:path3}
[g(v_1)]_1 \ge [g(v_2)]_1 \ge [g(v_3)]_1 \quad \text{and} \quad [g(v_1)]_2 \ge [g(v_2)]_2 \ge [g(v_3)]_2.
\end{equation}
Thus $|\GGG_{g(v_3) > g(v_1)}|=0$.
If $|\GGG_{g(v_1) > g(v_3)}|=0$, then the inequalities in \eqref{eqn:path3} become equalities, which contradicts \eqref{eqn:path2}.
Therefore $|\GGG_{g(v_1) > g(v_3)}|>0$ and so $|\GGG_{g(v_1) > g(v_3)}| - |\GGG_{g(v_3) > g(v_1)}| >0$, which contradicts \eqref{eqn:path1}.
Hence $\dim(P) \ge 3$ and we may conclude that $\dim(P)=3$.
\end{proof}

\begin{Thm}\label{thm:transitive}
Every digraph with weak majority dimension at most two is transitive.
\end{Thm}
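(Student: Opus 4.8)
The plan is to reduce to a single triple of vertices and argue by contradiction. Suppose $\dim(D) \le 2$ but $D$ is not transitive, so there are arcs $(x,y),(y,z) \in A(D)$ with $(x,z) \notin A(D)$. Let $D'$ be the subdigraph of $D$ induced by $\{x,y,z\}$; by Proposition~\ref{prop:induced dim} we have $\dim(D') \le \dim(D) \le 2$. Since the underlying graph of $D$ is simple, $D'$ contains $(x,y)$ and $(y,z)$, contains neither $(y,x)$ nor $(z,y)$, and (by assumption) does not contain $(x,z)$. The only remaining freedom is whether $(z,x) \in A(D')$, which splits the argument into two cases.

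If $(z,x) \notin A(D')$, then $D'$ is exactly the directed path $x \to y \to z$ of length two, so $\dim(D') = 3$ by Lemma~\ref{lem:directed path of length two}, contradicting $\dim(D') \le 2$.

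The remaining case is $(z,x) \in A(D')$, so $D'$ is a directed $3$-cycle, and here I would show directly that no directed $3$-cycle is $\RR^2$-realizable. The key observation is that in $\RR^2$ an arc forces componentwise dominance: if $g$ is an $\RR^2$-realizer and $(a,b) \in A(D')$, then $|\GGG_{g(a)>g(b)}| - |\GGG_{g(b)>g(a)}| > 0$ while $|\GGG_{g(a)>g(b)}| + |\GGG_{g(b)>g(a)}| \le 2$, which forces $|\GGG_{g(b)>g(a)}| = 0$, i.e. $[g(b)]_i \le [g(a)]_i$ for $i = 1,2$. Applying this to the three arcs of the cycle gives $[g(x)]_i \ge [g(y)]_i \ge [g(z)]_i \ge [g(x)]_i$ for each $i$, so $g(x) = g(y) = g(z)$; then $g(x)$ and $g(y)$ are incomparable, contradicting $(x,y) \in A(D')$. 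This contradiction finishes the proof.

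I expect the directed $3$-cycle case to be the only real content beyond invoking the earlier results, and it is short; the point to get right is the observation that $\succ$ restricted to $\RR^2$ is precisely the strict componentwise order. In fact this observation suggests an alternative route that bypasses the case split altogether: given any realizer one extends it to an $\RR^2$-realizer $g$ via $(\star)$, and then $(u,v) \in A(D) \Leftrightarrow g(v) \le g(u)$ componentwise with $g(u) \neq g(v)$, so transitivity of $D$ is immediate from transitivity of the componentwise order — I would likely present the case analysis (to reuse Lemma~\ref{lem:directed path of length two}) but could mention this shortcut.
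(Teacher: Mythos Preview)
Your proof is correct and follows the same outline as the paper --- reduce to the induced subdigraph on three vertices and invoke Proposition~\ref{prop:induced dim} together with Lemma~\ref{lem:directed path of length two} --- but you are in fact more careful than the paper. The paper's own argument only shows that $D$ contains no induced directed path of length two and then declares transitivity ``vacuously true''; it silently ignores the possibility that the induced subdigraph on $\{x,y,z\}$ is a directed $3$-cycle rather than a path. You correctly split into the two cases and dispose of the $3$-cycle by observing that in $\RR^2$ the relation $\succ$ forces componentwise domination, which admits no cycles. Your suggested shortcut --- extending any realizer to an $\RR^2$-realizer via $(\star)$ and reading off transitivity of $D$ directly from transitivity of the strict componentwise order on $\RR^2$ --- is cleaner than both the paper's argument and your own case split, and would make a perfectly good main proof.
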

\begin{proof}
Take a digraph $D$ with $\dim(D) \le 2$.
If $D$ contains a directed path of length two as an induced subdigraph, then $\dim(D) \ge 3$ by Proposition~\ref{prop:induced dim} and Lemma~\ref{lem:directed path of length two}, and we reach a contradiction.
Thus $D$ does not contain a directed path of length two as an induced subdigraph and hence the statement of this theorem is vacuously true.
\end{proof}

While proving Theorem~\ref{thm:transitive}, we also have shown the following statement.

\begin{Cor}\label{dim(P_2)=3}
Every digraph with weak majority dimension at most two does not contain a directed path of length two as an induced subdigraph.
\end{Cor}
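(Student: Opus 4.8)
The plan is to argue by contradiction, isolating the one line that is actually needed; indeed this corollary is just a packaging of the argument already carried out inside the proof of Theorem~\ref{thm:transitive}. Suppose $D$ is a digraph with $\dim(D) \le 2$ and, contrary to the claim, $D$ contains a directed path $P : v_1 \to v_2 \to v_3$ of length two as an \emph{induced} subdigraph. The hypothesis that $P$ is induced is precisely what lets me invoke Proposition~\ref{prop:induced dim}, which gives $\dim(P) \le \dim(D) \le 2$.

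On the other hand, Lemma~\ref{lem:directed path of length two} asserts that $\dim(P) = 3$ for every directed path of length two. Combining the two estimates yields $3 = \dim(P) \le 2$, a contradiction, so no such induced $P$ can exist. That is the entire proof.

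The step doing all the work is Lemma~\ref{lem:directed path of length two}; once it is available, there is genuinely no obstacle, so I would simply state the corollary as an immediate consequence of Proposition~\ref{prop:induced dim} together with that lemma, rather than reproving anything. If a self-contained argument were wanted, one would re-run the short case analysis behind Lemma~\ref{lem:directed path of length two}: an $\RR^2$-realizer $g$ of $P$ forces $|\GGG_{g(v_{i+1})>g(v_i)}| = 0$ for $i = 1,2$ (in two dimensions no coordinate can increase along an arc without another decreasing, while $g(v_1) \succ g(v_2)$ and $g(v_2) \succ g(v_3)$ must hold), hence both coordinates are weakly decreasing along $v_1, v_2, v_3$; then $g(v_1)$ and $g(v_3)$ are comparable, contradicting the non-adjacency of $v_1$ and $v_3$ forced by the path being induced.
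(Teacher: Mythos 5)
Your proof is correct and follows exactly the paper's own route: the corollary is obtained by combining Proposition~\ref{prop:induced dim} with Lemma~\ref{lem:directed path of length two}, which is precisely the argument embedded in the proof of Theorem~\ref{thm:transitive}. (Minor note on your optional self-contained sketch: after deducing that both coordinates weakly decrease along $v_1,v_2,v_3$, one still needs the short observation that not all coordinates can be equal --- otherwise $g(v_1)\succ g(v_2)$ would fail --- before concluding that $g(v_1)$ and $g(v_3)$ are comparable, as the paper's Lemma~\ref{lem:directed path of length two} does.)
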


The converse of Theorem~\ref{thm:transitive} is not true, i,e. there is a transitive digraph with weak majority dimension greater than two by Theorem~\ref{thm:has no upper bound}.

\section{Weak majority dimensions of directed paths and directed cycles}
In this section, we study weak majority dimensions of directed paths and directed cycles.
To do so, we need following lemmas which can easily be checked.

\begin{Lem}\label{lem:odd dim, has odd same comp}
Let $x \in \RR^{2d+1}$ and $y \in \RR^{2d+1}$ for a positive integer $d$.
If $x$ and $y$ are incomparable, then the size of the set $\{i \in [2d+1] \mid [x]_i = [y]_i \}$ is odd.
\end{Lem}

\begin{Lem}\label{lem:have no same component}
For $x, y, z \in \RR^3$, suppose that $x \prec y$,  $y \prec z$ and $x$ and $z$ are incomparable.
Then $\{i \in [3] \mid [x]_i = [y]_i \}=\{i \in [3] \mid [y]_i = [z]_i\}=\emptyset$.
\end{Lem}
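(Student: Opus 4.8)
The plan is to reduce the claim to a coordinate-by-coordinate sign analysis of the triples $([x]_i,[y]_i,[z]_i)$, $i \in [3]$. First I would observe that $x \neq z$: otherwise $x \prec y$ gives $|\GGG_{y>x}| > |\GGG_{x>y}|$ while $y \prec z = x$ gives $|\GGG_{x>y}| > |\GGG_{y>x}|$, a contradiction. Since $x$ and $z$ are incomparable, $|\GGG_{x>z}| = |\GGG_{z>x}|$, and as these two sets together with $\{i : [x]_i = [z]_i\}$ partition $[3]$, the set of equal coordinates has odd size; being distinct from $z$ it cannot have size $3$, so there is exactly one coordinate $c$ with $[x]_c = [z]_c$, one coordinate $a$ with $[x]_a < [z]_a$, and one coordinate $b$ with $[x]_b > [z]_b$ (this is also immediate from Lemma~\ref{lem:odd dim, has odd same comp}).

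Next, for each $i \in [3]$ put $s_i := \operatorname{sgn}([y]_i - [x]_i) + \operatorname{sgn}([z]_i - [y]_i)$. From $x \prec y$ and $y \prec z$ we have $\sum_{i} \operatorname{sgn}([y]_i - [x]_i) = |\GGG_{y>x}| - |\GGG_{x>y}| \geq 1$ and likewise $\sum_{i} \operatorname{sgn}([z]_i - [y]_i) \geq 1$, hence $\sum_i s_i \geq 2$. On the other hand $s_c = 0$ (since $[x]_c = [z]_c$ forces $\operatorname{sgn}([y]_c - [x]_c) = -\operatorname{sgn}([z]_c - [y]_c)$), a short case check on the location of $[y]_b$ gives $s_b \leq 0$ (since $[x]_b > [z]_b$), and trivially $s_a \leq 2$. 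Therefore $2 \leq s_a + s_b + s_c \leq 2$, which forces $s_a = 2$, $s_b = 0$, and $s_c = 0$.

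Finally I would read off the consequences. Equality $s_a = 2$ forces $[x]_a < [y]_a < [z]_a$, so $[y]_a \notin \{[x]_a,[z]_a\}$ and $\operatorname{sgn}([y]_a - [x]_a) = \operatorname{sgn}([z]_a - [y]_a) = 1$; and $s_b = 0$ with $[x]_b > [z]_b$ forces $[y]_b > [x]_b$ or $[y]_b < [z]_b$, so again $[y]_b \notin \{[x]_b,[z]_b\}$. It remains to exclude $[y]_c = [x]_c = [z]_c$. Here I would use the two sign sums \emph{individually}: after subtracting the coordinate-$a$ terms they read $\operatorname{sgn}([y]_b - [x]_b) + \operatorname{sgn}([y]_c - [x]_c) \geq 0$ and $\operatorname{sgn}([z]_b - [y]_b) + \operatorname{sgn}([z]_c - [y]_c) \geq 0$; if $[y]_c = [x]_c = [z]_c$ the $c$-terms vanish, while whichever of $[y]_b > [x]_b$, $[y]_b < [z]_b$ holds makes one of the $b$-terms equal to $-1$ (because $[z]_b < [x]_b$), a contradiction. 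Hence $[y]_c \notin \{[x]_c,[z]_c\}$ as well, so $\{i : [x]_i = [y]_i\} = \{i : [y]_i = [z]_i\} = \emptyset$.

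The bookkeeping is entirely elementary, so the only point that needs care is recognizing that the single inequality $\sum_i s_i \geq 2$ does not by itself pin down coordinate $c$: the argument must fall back on the two summed sign conditions separately to kill the case $[y]_c = [x]_c = [z]_c$. Organizing the sign case-checks compactly, rather than enumerating all orderings of three reals in each of the three coordinates, is the only mildly fiddly part.
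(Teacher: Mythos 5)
Your proof is correct, but it is organized quite differently from the paper's. You run a global sign-count: using the incomparability of $x$ and $z$ (and $x\neq z$) you first pin down the exact shape of the pair $(x,z)$ — one tied coordinate $c$, one coordinate $a$ with $[x]_a<[z]_a$, one coordinate $b$ with $[x]_b>[z]_b$ — and then squeeze $\sum_i s_i$ between $2$ and $2$ to force $s_a=2$, $s_b=s_c=0$, finishing with the two sign sums taken separately to kill the case $[y]_c=[x]_c=[z]_c$. I checked the case analysis ($s_b\le 0$ from $[x]_b>[z]_b$; the final contradiction in both subcases $[y]_b>[x]_b$ and $[y]_b<[z]_b$) and it all goes through. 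The paper instead argues locally by contradiction: if some coordinate of $x$ and $y$ ties, then $x\prec y$ in $\RR^3$ forces $[x]_i\le [y]_i$ in \emph{every} coordinate, whence $\GGG_{z>y}\subset\GGG_{z>x}$ and $\GGG_{x>z}\subset\GGG_{y>z}$, so $|\GGG_{z>x}|-|\GGG_{x>z}|\ge|\GGG_{z>y}|-|\GGG_{y>z}|>0$, contradicting the incomparability of $x$ and $z$; a symmetric argument handles a tie between $y$ and $z$. The paper's domination argument is shorter and needs no case split; yours is heavier on bookkeeping but extracts strictly more information as a byproduct (the full sign pattern, e.g.\ $[x]_a<[y]_a<[z]_a$), which is the kind of structural data that could be reused in arguments like the one for Theorem~\ref{thm:directed path dim is bounded}.
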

\begin{proof}
Suppose, to the contrary, that $\{i \in [3] \mid [x]_i = [y]_i \} \neq \emptyset$.
Without loss of generality, $[x]_1=[y]_1$.
If $[x]_2 > [y]_2$ or $[x]_3 > [y]_3$, then $|\GGG_{x>y}| \ge 1$ and $|\GGG_{y>x}| \le 1$, which contradicts to the fact that $x \prec y$.
Therefore $[x]_2 \le [y]_2$ and $[x]_3 \le [y]_3$.

Since $y \prec z$, $|\GGG_{z>y}| - |\GGG_{y>z}| > 0$.
Since $[x]_i \le [y]_i$ for each $i=1,2,3$, $\GGG_{z>y} \subset \GGG_{z>x}$ and $\GGG_{y>z} \supset \GGG_{x>z}$.
Hence $$|\GGG_{z>x}| - |\GGG_{x>z}| \ge |\GGG_{z>y}| - |\GGG_{y>z}| > 0$$ and we reach a contradiction to the hypothesis that $x$ and $z$ are incomparable.
Therefore $\{i \in [3] \mid [x]_i = [y]_i \}=\emptyset$.
By applying a symmetric argument, we may show that $\{i \in [3] \mid [y]_i = [z]_i\}=\emptyset$.
\end{proof}

\begin{Thm}\label{thm:directed path dim is bounded}
Every directed path has weak majority dimension at most four and there exist directed paths with weak majority dimension four.
\end{Thm}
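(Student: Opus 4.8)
The plan is to prove the two halves of the statement separately: first that $\dim(P_n) \le 4$ for every directed path $P_n$, and then that this bound is attained.

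For the upper bound, I would construct an explicit $\RR^4$-realizer for an arbitrary directed path $P_n : v_1 \to v_2 \to \cdots \to v_n$. The idea is to combine two "one-dimensional chain" ideas with an offset trick that breaks the comparabilities between non-consecutive vertices. Roughly, I would use the first two coordinates to encode a strictly decreasing sequence along the path (so that $f(v_i) \succ f(v_{i+1})$ gets contributions $+2$ from the first block), and use the last two coordinates to cancel out: I would want $[f(v_i)]_3$ and $[f(v_i)]_4$ arranged so that for consecutive vertices the third/fourth coordinates contribute $0$ to the margin (e.g. make them equal or have one increase while the other decreases in a balanced way), while for $v_i$ and $v_j$ with $|i-j|\ge 2$ the third and fourth coordinates contribute enough "the wrong way" to force a tie $|\GGG_{f(v_i)>f(v_j)}| = |\GGG_{f(v_j)>f(v_i)}| = 2$. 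A clean way to do this is a parity/period-based assignment: let the third and fourth coordinates depend on $i \bmod$ something, chosen so that jumping by one step is "neutral" but jumping by two or more steps introduces a compensating disagreement. I would then just verify the two conditions: consecutive pairs give margin $>0$, and non-consecutive pairs give margin $0$.

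For the sharpness half, I need a specific directed path $P$ with $\dim(P) = 4$, i.e. $\dim(P) > 3$. By Proposition~\ref{prop:induced dim} it suffices to exhibit such a $P$, and the natural candidate is a directed path of some fixed length $\ell$ (I would guess $\ell = 3$ or $4$ suffices, giving $P_4$ or $P_5$). The argument would be by contradiction: suppose $g : V(P) \to \RR^3$ is an $\RR^3$-realizer. For each consecutive pair $g(v_i) \succ g(v_{i+1})$ the margin is $>0$, so in $\RR^3$ it is at least the pattern "$2$ coordinates down, at most $1$ up", and for each non-consecutive pair the margin is $0$. I would bring in Lemma~\ref{lem:odd dim, has odd same comp} (incomparable points in $\RR^3$ share exactly $1$ coordinate) and Lemma~\ref{lem:have no same component} (if $x \prec y \prec z$ with $x,z$ incomparable then consecutive pairs share no coordinate) to pin down the coordinate-comparison structure tightly, then derive a contradiction from counting how the three coordinates can simultaneously realize all the required strict decreases along the path and all the required ties between vertices two or more apart. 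The point is that along a sufficiently long path, the "which coordinate goes up" choices at each step cannot be reconciled with the global tie requirements; a pigeonhole on the (at most $3$) possible "up-coordinate" choices across the steps, combined with the no-shared-coordinate constraints, should force an unwanted comparability or an unwanted tie.

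The main obstacle I expect is the sharpness direction: getting the right minimal length $\ell$ and organizing the case analysis in $\RR^3$ cleanly. One has to track, for each step $v_i \to v_{i+1}$, exactly which of the three coordinates strictly decreases and which (if any) increases, then show that the incomparability of $v_i$ and $v_j$ for $|i-j| \ge 2$ — which by Lemma~\ref{lem:odd dim, has odd same comp} forces exactly one shared coordinate and hence exactly one "up" and one "down" among the other two — cannot be globally consistent. I would try $P_5$ first (length four), mirroring the $\RR^2$ impossibility argument in Lemma~\ref{lem:directed path of length two} but one dimension higher, and fall back to a longer path if the case count does not close; the upper-bound construction, by contrast, I expect to be a routine explicit verification once the period/offset is chosen correctly.
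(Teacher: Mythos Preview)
Your upper-bound plan matches the paper's approach: the paper gives an explicit $\RR^4$-realizer for $P_{2n-1}$ with the first two coordinates strictly decreasing along the path and the last two coordinates arranged (with a parity twist on even-indexed vertices) so that non-consecutive pairs tie $2$--$2$, exactly the ``two-block with offset'' structure you describe.

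For sharpness you have the right toolkit (Lemmas~\ref{lem:odd dim, has odd same comp} and~\ref{lem:have no same component} plus pigeonhole) but two specifics differ from the paper and are worth flagging. First, the paper uses the path on \emph{ten} vertices, not four or five; your hedge about falling back to a longer path is prudent, but the required length is substantially larger than you guess, and a case analysis on $P_4$ or $P_5$ will not close (or at least the paper does not attempt it there). Second, and more importantly, the paper's pigeonhole is not on the ``up-coordinate'' at each step but on the \emph{shared coordinate with a fixed anchor vertex}: for each $i \in \{3,\dots,9\}$, Lemma~\ref{lem:odd dim, has odd same comp} forces $g(u_i)$ to agree with $g(u_1)$ in exactly one of the three coordinates, so some coordinate (say the first) is shared by at least three of these seven vertices, call them $u_p, u_q, u_r$ with $p<q<r$. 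Lemma~\ref{lem:have no same component} then forces $p,q,r$ to be pairwise non-consecutive, hence pairwise incomparable, which makes their second and third coordinates pairwise distinct. The contradiction is obtained from $u_{r+1}$: it must share exactly one coordinate with each of $u_1, u_p, u_q$, but it cannot share the first (since $[g(u_{r+1})]_1 \neq [g(u_r)]_1$), and in coordinates $2$ and $3$ the values at $u_1,u_p,u_q$ are pairwise distinct, so $u_{r+1}$ can match at most two of the three. Your ``up-coordinate per step'' pigeonhole tracks a different invariant and it is not clear it yields a contradiction as directly; if your case analysis stalls, switch to pigeonholing on the coordinate shared with a fixed anchor.
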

\begin{proof}
Let $n$ be a positive integer and $P:=v_1 \rightarrow v_2 \rightarrow \cdots \rightarrow v_{2n-1}$ be a directed path of length $2n-2$.
To show the $\dim(P) \le 4$, we define a map $f:V(P) \to \RR^4$ as follow:
\begin{itemize}
  \item[(i)] $f(v_{2i-1})=(2n-2i+1,2n-2i+1,2i-1, 2i-1)$ for each $i =1$, $\ldots$, $n$;
  \item[(ii)] $f(v_{2i})=(2n-2i, 2n-2i, 2i-2, 2i+2)$ for each $i=1$, $\ldots$, $n-1$.
\end{itemize}
Then, for any $1 \le i < j \le 2n-1$ with $j-i \ge 2$, $$\GGG_{f(v_{i})>f(v_{j})}=\{1,2\} \quad \text{and} \quad  \GGG_{f(v_{j})>f(v_{i})}=\{3,4\}$$
and so $f(v_i)$ and $f(v_j)$ are incomparable.
On the other hand, $$\GGG_{f(v_{2i-1})>f(v_{2i})}=\{1,2,3\} \quad \text{and} \quad  \GGG_{f(v_{2i})>f(v_{2i+1})}=\{1,2,4\}$$ for any $i \in [n-1]$.
Therefore $f$ is an $\RR^4$-realizer of $P$ and so $\dim(P) \le 4$.
Since $n$ is arbitrarily chosen, the weak majority dimension of a directed path of even length is at most four and so, by Lemma~\ref{prop:induced dim}, the weak majority dimension of a directed path is at most four.

Now we shall show that a directed path
\[Q:=u_1 \rightarrow u_2 \rightarrow \cdots \rightarrow u_{10}\]
has weak majority dimension four.
Suppose, to the contrary, that $\dim(Q) \le 3$.
Then there is an $\RR^3$-realizer $g$ of $Q$ by ($\star$).

Now take $i$ and $j$ in the set $\{1, 2, \ldots, 10\}$ with $j-i \ge 2$.
Then, since $u_i$ and $u_j$ are not adjacent in $Q$, $g(u_i)$ and $g(u_j)$ are incomparable.
Thus, by Lemma~\ref{lem:odd dim, has odd same comp}, $|\{l \in [3] \mid [g(u_i)]_l = [g(u_j)]_l \}|$ is odd.
Since $N_Q^+(u_i) \neq N_Q^+(u_j)$, $g(u_i) \neq g(u_j)$.
Therefore we have shown that
\begin{equation} \label{eq:one}
|\{l \in  [3]  \mid [g(u_i)]_l = [g(u_j)]_l \}|=1
\end{equation} for each pair of $i$ and $j$ in the set $\{1, 2, \ldots, 10\}$ with $j-i \ge 2$.

Let $$I_l= \{i \in \{3,4,\ldots,9\} \mid  [g(u_1)]_l=[g(u_i)]_l \}$$ for each $l = 1, 2, 3$.
Since $i-1 \ge 2$ for $i \in \{3, 4, \ldots, 9\}$, the set $\{3, 4, \ldots, 9\}$ is the disjoint union of $I_1$, $I_2$, and $I_3$ by \eqref{eq:one}, so $$|I_1|+|I_2|+|I_3|=7.$$
Therefore, by the Pigeonhole principle, at least one of $|I_1|$, $|I_2|$, and $|I_3|$ is greater than two.
Without loss of generality, we may assume that $|I_1| \ge 3$.
Then we may take $p, q, r \in I_1$ satisfying $3 \le p<q<r \le 9$.
Since $p, q, r \in I_1$, \begin{equation}\label{eq:equal}[g(u_1)]_1 =[g(u_p)]_1=[g(u_q)]_1=[g(u_r)]_1.\end{equation}
By Lemma~\ref{lem:have no same component},
\begin{equation}\label{eq:equal2}
\{j \in [3] \mid [g(u_k)]_j = [g(u_{k+1})]_j \}=\emptyset
\end{equation}
for each $k \in \{1, 2, \ldots, 9\}$.
Therefore $|p-q| \ge 2$, $|q-r| \ge 2$, and $|r-p| \ge 2$.
Thus any pair of $g(u_p)$, $g(u_q)$, and $g(u_r)$ are incomparable.
Then, by \eqref{eq:one} and \eqref{eq:equal}, $[g(u_1)]_2$, $[g(u_p)]_2$, and $[g(u_q)]_2$ are all distinct and $[g(u_1)]_3$, $[g(u_p)]_3$, and $[g(u_q)]_3$ are all distinct.
By the way, for each $i =1, p, q$,
\[
(r+1)-i \ge 2,
\]
so
\[
|\{l \in  [3]  \mid [g(u_{r+1})]_l = [g(u_i)]_l \}|=1
\]
by \eqref{eq:one}.
Yet, $[g(u_r)]_1 \neq [g(u_{r+1})]_1$ by \eqref{eq:equal2}.
Therefore $$[g(u_1)]_1 =[g(u_p)]_1=[g(u_q)]_1 \neq [g(u_{r+1})]_1$$ by \eqref{eq:equal}.
Since $[g(u_1)]_2$, $[g(u_p)]_2$, and $[g(u_q)]_2$ are all distinct and $[g(u_1)]_3$, $[g(u_p)]_3$, and $[g(u_q)]_3$ are all distinct, $[g(u_{r+1})]_l$ is distinct from at least two of $[g(u_1)]_l$, $[g(u_p)]_l$, and $[g(u_q)]_l$ for each $l=2,3$.
Therefore $[g(u_{r+1})]_l \neq [g(u_{i^*})]_l$ for some $i^* \in \{1, p, q\}$ and for each $l=2,3$ and we reach a contradiction to \eqref{eq:one}.
\end{proof}

By Proposition~\ref{prop:add arc} and Theorem~\ref{thm:directed path dim is bounded}, the weak majority dimension of a directed cycle with length at least three is at most six.
To improve this upper bound, we need the following lemma.

\begin{Lem}\label{lem:cycle matrix}
For a positive integer $n \ge 4$, there is an $n \times 4$ matrix $A_n=(a_{ij})$ satisfying the following properties.
\begin{itemize}
  \item[(i)] $a_{ij} \in \NN$ for any $i \in [n]$ and $j \in [4]$;
  \item[(ii)] $a_{ik} \neq a_{jk}$ for any $1 \le i < j \le n$ and $k \in [4]$;
  \item[(iii)] $a_{nj}=\max\{a_{lj} \mid l \in [n]\}$ and $a_{1k}=\min\{a_{lk} \mid l \in [n]\}$ for some $j, k \in [4]$ and $j \neq k$;
  \item[(iv)] $|\{ k\in [4] \mid a_{ik}-a_{(i+1)k}>0 \}|=3$ for any $i \in [n]$ (we identify $n+1$ with $1$);
  \item[(v)] $|\{ k\in[4] \mid a_{ik}-a_{jk}>0 \}|=2$ for any $i, j \in [n]$ with $|i -j| \ge 2$ and $\{i, j\}\neq\{1,n\}$.
\end{itemize}
\end{Lem}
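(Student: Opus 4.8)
\textit{What the matrix is for.} Reading the $i$-th row of $A_n$ as a point $f(v_i)\in\RR^4$ will exhibit an $\RR^4$-realizer of the directed $n$-cycle $v_1\to v_2\to\cdots\to v_n\to v_1$: by (ii) no two rows agree in any coordinate, so (iv) says each forward consecutive pair beats the next in exactly $3$ of the $4$ coordinates and loses in $1$, hence $f(v_{i+1})\prec f(v_i)$; while (v) says every other pair splits $2$–$2$ and is therefore incomparable (the pair $\{1,n\}$ is omitted from (v) only because it is itself an arc, handled by (iv) with $i=n$). So the plan is simply to write down such a matrix and verify the five clauses. I would produce $A_n$ by an explicit rule. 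The natural base case is $n=4$: take the circulant $a_{ij}=((j-i)\bmod 4)+1$, whose rows are the four cyclic rotations of $(1,2,3,4)$; one checks (i)–(v) in a line, with $j=3$, $k=1$ in (iii). For general $n$ one uses a similar matrix, either via a closed formula (with a small split according to $n\bmod 4$) or inductively, passing from $A_n$ to $A_{n+1}$ by subdividing the arc $(v_n,v_1)$, slotting in a new row for the subdivision vertex, and adjusting so that the pair $(v_n,v_1)$, now non-adjacent, becomes incomparable; clause (iii) of $A_n$ is exactly what makes this insertion controllable, since it pins $v_n$ to the top of one coordinate-order and $v_1$ to the bottom of another.

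\textit{Order of work.} Clauses (i) and (ii) (entries in $\NN$; each column a permutation of $\{1,\dots,n\}$ after rescaling) are read straight off the formula. Clause (iii) is arranged by choosing which edge of the cycle is the ``ascending edge'' of each coordinate-order, placing $v_n$ at the top of one and $v_1$ at the bottom of another. Clause (iv) reduces to distributing the $n$ edges of the cycle, one per edge, among the four coordinate-orders as their ascending edges, so that every forward step is an ascent in exactly one coordinate and a descent in the other three; this is a combinatorial bookkeeping step. The substantive part is clause (v): for every non-consecutive pair, and for all of them simultaneously, the four coordinate-orders must split $2$–$2$.

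\textit{Main obstacle.} Clause (v) is where the difficulty sits. If a coordinate-order is (essentially) monotone around the cycle it behaves like a total order and forces far-apart vertices to be comparable — one then gets a $3$–$1$, or even a $4$–$0$, split and a spurious arc appears. Hence for $n>4$ the four coordinate-orders cannot be rotations of one monotone pattern: their ascending edges must be interleaved around the cycle so that each order genuinely zig-zags, and the leftover freedom — the relative order of vertices lying on a common descending run — must be fixed consistently across the four orders so that no non-consecutive pair becomes comparable. When $n\not\equiv 0\pmod 4$ the $n$ ascending edges cannot be shared out evenly among the four coordinates, so small local modifications are needed, and checking that they disturb neither (iv) nor (v) is the fiddly heart of the argument; in the inductive formulation the same difficulty appears as the single step of turning the $3$–$1$ pair $(v_n,v_1)$ into a $2$–$2$ pair while keeping $v_n$ incomparable to all of $v_2,\dots,v_{n-2}$.
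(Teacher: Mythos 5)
Your proposal is a plan rather than a proof: the only matrix you actually produce is the circulant for $n=4$ (which does satisfy (i)--(v), with $j=3$, $k=1$ in (iii)), while for every $n\ge 5$ you only describe what a construction would have to achieve --- ``a similar matrix,'' ``a small split according to $n\bmod 4$,'' an insertion step whose correctness you yourself call ``the fiddly heart of the argument'' --- without carrying any of it out. Since the entire content of the lemma is the existence of $A_n$ for all $n\ge 4$, this is a genuine gap, not a presentational one. For comparison, the paper argues by induction from an explicit $A_4$: it permutes columns so that $a_{n1}$ is the maximum of column $1$ and $a_{12}$ the minimum of column $2$ (this is exactly the role you correctly assign to (iii)), doubles every entry of $A_n$, and appends the row $(2a_{n1}-1,\,2a_{12}+1,\,2M+1,\,2m-1)$, where $M$ and $m$ are the maximum of column $3$ and the minimum of column $4$; the new row then loses to $v_n$ and beats $v_1$ three-to-one and splits two--two against each of $v_2,\dots,v_{n-1}$.

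That said, the single difficulty you isolate --- turning the formerly adjacent pair $\{v_n,v_1\}$ into an incomparable pair --- is precisely the step that merely appending a row cannot address: doubling preserves every comparison among the old rows, so the pair $\{1,n\}$ still splits three--one in $A_{n+1}$, whereas (v) now demands two--two (for instance, rows $1$ and $4$ of the $A_5$ obtained from the paper's recipe are $(6,2,4,8)$ and $(8,4,6,2)$, which split one--three). So your diagnosis of where the real work lies is accurate, and any complete argument must also perturb the old rows $1$ and $n$ (or abandon induction for a global closed-form construction verified case by case); but flagging the obstacle is not the same as overcoming it, and as written your proposal leaves the lemma unproved for every $n\ge 5$.
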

\begin{proof}
We show by induction on $n$.
Let
\[
A_4=\begin{bmatrix}
 3 & 1 & 2 & 4   \\
 2 & 4 & 1 & 3   \\
 1 & 3 & 4 & 2   \\
 4 & 2 & 3 & 1
\end{bmatrix}.
\]
Then it is easy to check that $A_4$ satisfies the conditions given in this lemma.

Now suppose that an $n \times 4$ matrix $A_n$ satisfies the given conditions for some integer $n \ge 4$.
Since a matrix obtained from $A_n$ by a column permutation still satisfies the given conditions, we may assume that $a_{n1}=\max\{a_{l1}\mid l \in [n]\}$ and $a_{12}=\min\{a_{l2} \mid l \in [n]\}$.
We let $M=\max\{a_{l3} \mid l \in [n]\}$ and  $m=\min\{a_{l4} \mid l \in [n]\}$.
Now we define the $(n+1) \times 4$ matrix $A_{n+1}=(a^*_{ij})$ as follow:
\begin{itemize}
  \item[(1)] $a^*_{ij}=2a_{ij}$ for any $i \in [n]$ and any $j \in [4]$;
  \item[(2)] $a^*_{n+1,1}=2a_{n1}-1$, $a^*_{n+1,2}=2a_{12}+1$, $a^*_{n+1,3}=2M+1$, and $a^*_{n+1,4}=2m-1$.
\end{itemize}
Obviously $A_{n+1}$ satisfies the conditions (i) and (ii) given in the lemma statement.
Then $a^*_{n+1,3}$ and $a^*_{12}$ are the maximum and the minimum of the third column and the second column, respectively, in $A_{n+1}$ and $A_{n+1}$ satisfies the condition (iii).
It is easy to check the following:
$\{k\in [4]  \mid a^*_{nk}-a^*_{n+1,k}>0 \} =\{1,2,4\}$; $\{k\in [4] \mid   a^*_{n+1,k}-a^*_{1k}>0 \}=\{1,2,3\}$;
$\{k\in [4] \mid   a^*_{n+1,k}-a^*_{jk}>0\}=\{1,3\}$; $\{k\in [4] \mid   a^*_{jk}-a^*_{n+1,k}>0\}=\{2,4\}$ for each $j =2$, $\ldots$, $n-1$.
Thus the conditions (iv) and (v) are satisfied.
\end{proof}
 %
%

\begin{Thm}\label{thm:directed cycle dim is bounded}
Every directed cycle has weak majority dimension three or four both of which are achievable.
\end{Thm}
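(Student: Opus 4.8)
The plan is to prove the theorem in three parts. First I would establish the upper bound $\dim(C_n)\le 4$ for every directed cycle $C_n$, $n\ge 3$. For $n\ge 4$ this follows immediately from Lemma~\ref{lem:cycle matrix}: take the matrix $A_n=(a_{ij})$ and define $f:V(C_n)\to\RR^4$ by letting $f(v_i)$ be the $i$th row of $A_n$, where $v_1\to v_2\to\cdots\to v_n\to v_1$ is the cycle. Condition (ii) guarantees no two images share a coordinate; condition (iv) says $|\GGG_{f(v_i)>f(v_{i+1})}|=3$ and hence $|\GGG_{f(v_{i+1})>f(v_i)}|=1$, so $(v_i,v_{i+1})\in A(C_n)$ is realized; condition (v) says consecutive-by-$\ge 2$ pairs (other than $\{1,n\}$) satisfy $|\GGG_{f(v_i)>f(v_j)}|=|\GGG_{f(v_j)>f(v_i)}|=2$, hence incomparable, as required; and condition (iii) is exactly what forces the pair $\{v_1,v_n\}$ to be incomparable as well (the row $n$ is maximal in some coordinate $j$ and minimal — via row $1$ — in another coordinate $k\ne j$, so $|\GGG|$ on both sides equals $2$). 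Thus $f$ realizes $C_n$ in $\RR^4$ for $n\ge 4$. For $n=3$, one checks by hand a small explicit $\RR^3$-realizer (for instance mimicking the construction in Lemma~\ref{lem:directed path of length two} or cycling coordinates), giving $\dim(C_3)\le 3$.

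Next I would establish that three is actually achieved by some directed cycle: in fact $\dim(C_3)=3$. The lower bound $\dim(C_3)\ge 3$ follows because $C_3$ is not transitive (it contains a directed path of length two, e.g. $v_1\to v_2\to v_3$, but $(v_1,v_3)\notin A(C_3)$), so by Theorem~\ref{thm:transitive} we cannot have $\dim(C_3)\le 2$; combined with the upper bound, $\dim(C_3)=3$. So three is achievable.

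The remaining and hardest part is to show four is achieved by some directed cycle, i.e. to exhibit a directed cycle $C_n$ with $\dim(C_n)\ge 4$, equivalently, with no $\RR^3$-realizer. I would argue this for a sufficiently long cycle, say $n=10$ or larger, imitating the argument for directed paths in Theorem~\ref{thm:directed path dim is bounded}. Suppose $g$ is an $\RR^3$-realizer of $C_n$. For vertices $u_i,u_j$ at cyclic distance at least $2$ (and the pair not being a cycle-edge), $g(u_i),g(u_j)$ are incomparable, so by Lemma~\ref{lem:odd dim, has odd same comp} they share an odd number of coordinates; since out-neighborhoods differ, $g(u_i)\ne g(u_j)$, so they share exactly one coordinate. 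Also for each cycle-edge $u_k\to u_{k+1}$, its endpoints together with a vertex two steps away satisfy the hypothesis of Lemma~\ref{lem:have no same component}, forcing $g(u_k)$ and $g(u_{k+1})$ to share no coordinate. Then, fixing a vertex and partitioning the other "far" vertices according to which of the three coordinates they agree with it in, the Pigeonhole principle produces three far-apart vertices $u_p,u_q,u_r$ all agreeing with $u_1$ in the same coordinate (say coordinate $1$); looking at a fourth vertex $u_{r+1}$ that is far from each of $u_1,u_p,u_q$ and using that $u_r$ and $u_{r+1}$ disagree in coordinate $1$ yields the same contradiction as in the directed-path proof. I expect the main obstacle to be bookkeeping the cyclic distances so that all the "distance $\ge 2$ and not an edge" side conditions genuinely hold; one must choose $n$ large enough (the path argument used length $10$, i.e. $10$ vertices, so $n\ge 11$ or so should be safe) and be careful that the pair $\{u_r,u_{r+1}\}$ and the pairs $\{u_{r+1},u_1\},\{u_{r+1},u_p\},\{u_{r+1},u_q\}$ are all of the "incomparable" type rather than accidental cycle-edges. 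Once the right $n$ is fixed, the combinatorial contradiction is essentially the one already carried out for paths, and combined with the $\le 4$ bound this gives $\dim(C_n)=4$, completing the proof.
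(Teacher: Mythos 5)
There is a genuine error in your verification of the upper bound. In the directed cycle $C_n:=v_1\rightarrow v_2\rightarrow\cdots\rightarrow v_n\rightarrow v_1$ the pair $\{v_1,v_n\}$ is \emph{not} a non-adjacent pair: $(v_n,v_1)$ is an arc, so the realizer must satisfy $f(v_n)\succ f(v_1)$, i.e.\ $|\GGG_{f(v_n)>f(v_1)}|-|\GGG_{f(v_1)>f(v_n)}|>0$. Your claim that ``condition (iii) is exactly what forces the pair $\{v_1,v_n\}$ to be incomparable'' is therefore wrong twice over: incomparability of $f(v_1)$ and $f(v_n)$ would break the realizer, and condition (iii) does not yield $|\GGG|=2$ on both sides in any case. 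What actually handles this pair is condition (iv) read at $i=n$ with the identification $n+1\equiv 1$, which gives $|\{k\mid a_{nk}-a_{1k}>0\}|=3$ and hence (with condition (ii)) $f(v_n)\succ f(v_1)$, realizing the closing arc; condition (iii) is only a bookkeeping device for the induction inside Lemma~\ref{lem:cycle matrix}. This is why condition (v) explicitly excludes the pair $\{1,n\}$. The rest of your upper-bound verification, the $n=3$ case, and the ``three is achieved'' argument via Theorem~\ref{thm:transitive} match the paper.

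For the ``four is achieved'' part you propose to rerun the whole Erd\H{o}s--Szekeres-style path argument on a long cycle, and you yourself flag the cyclic-distance bookkeeping as an unresolved obstacle; as written this part is a sketch, not a proof (one would at minimum have to recheck Lemma~\ref{lem:have no same component} and the pigeonhole step against the extra adjacency $v_n\rightarrow v_1$). The paper avoids all of this: for $n\ge 11$ the subdigraph of $C_n$ induced by $v_1,\ldots,v_{10}$ is exactly the $10$-vertex directed path $Q$ of Theorem~\ref{thm:directed path dim is bounded}, so Proposition~\ref{prop:induced dim} gives $\dim(C_n)\ge\dim(Q)=4$ immediately, and combined with the upper bound $\dim(C_n)=4$. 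You should replace your third part with this one-line monotonicity argument rather than attempting to adapt the path proof to the cyclic setting.
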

\begin{proof}
By the assumption at the beginning of the paper, we only consider directed cycles of length greater than or equal to $3$.
Let $n$ be an integer greater than or equal to $3$ and $C_n:=v_1 \rightarrow v_2 \rightarrow \cdots \rightarrow v_n \rightarrow v_1$ be a directed cycle of length $n$.
Since a directed cycle is not transitive,  $\dim(C_n) \ge 3$ by Theorem~\ref{thm:transitive}.

Now we show $\dim(C_n) \le 4$.
Suppose $n=3$ and let $f:V(C_3) \to \RR^3$ be a map defined by
$$f(v_1)=(1,2,3), \quad f(v_2)=(3,1,2), \quad \text{and} \quad f(v_3)=(2,3,1).$$
Then it is easy to check that $f$ is an $\RR^3$-realizer of $C_3$ and so $\dim(C_3) \le 3$.
Now suppose $n \ge 4$.
Then, by Lemma~\ref{lem:cycle matrix}, there is an $n \times 4$ matrix $A_n=(a_{ij})$ satisfying the conditions given in the lemma statement.
We define a map $g:V(C_n) \to \RR^4$ by  $$[g(v_i)]_j = a_{ij}$$ for any $i \in [n]$ and $j \in \{1,2,3,4\}$.
Then, by the conditions (ii) and (iv) in Lemma~\ref{lem:cycle matrix}, $|\GGG_{g(v_i)>g(v_{i+1})}|-|\GGG_{g(v_{i+1})>g(v_{i})}|=2$ for each $i=1$, $\ldots$, $n$ (we identify $n+1$ with $1$).
Furthermore, by the conditions (ii) and (v) in Lemma~\ref{lem:cycle matrix}, $|\GGG_{g(v_i)>g(v_j)}|=|\GGG_{g(v_j)>g(v_i)}|$ for each pair of $i$ and $j$ in $[n]$ with $|i-j| \ge 2$ and $\{i, j\}\neq\{1,n\}$.
Thus $g$ is an $\RR^4$-realizer of $C_n$ and so $\dim(C_n) \le 4$.
Therefore the weak majority dimension of $C_n$ is three or four.

In the above argument, we actually showed that the weak majority dimension of $C_3$ is three.
By Theorem~\ref{thm:directed path dim is bounded}, there is a directed path $P$ with $\dim(P) = 4$.
Let $C$ be a directed cycle containing a directed path $P$ as an induced subdigraph.
Then, by Proposition~\ref{prop:induced dim}, $\dim(C) \ge 4$, so $\dim(C)=4$.
\end{proof}

\section{Concluding Remarks}

We gave a necessary condition for a digraph having weak digraph dimension at most two.
We would like to see whether or not there is a meaningful necessary and sufficient condition for a digraph having weak digraph dimension at most two.

By Theorem~\ref{thm:has no upper bound},  for any nonnegative integer $d$, there is a (transitive) digraph $D$ such that $\dim(D) > d$.
We go further to know whether or not the following is true.
\begin{Conj}
For any nonnegative integer $d$, there is a (transitive) digraph $D$ such that $\dim(D) = d$.
\end{Conj}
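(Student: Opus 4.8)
The plan is to establish the conjecture in its stronger ``transitive'' form (which implies the plain form) by combining Theorem~\ref{thm:has no upper bound} with a ``no gaps'' statement for the weak majority dimensions attained by transitive digraphs. Concretely, the target is the following \emph{Key Lemma}: for every transitive digraph $D$ with $\dim(D)\ge 1$ there is an induced subdigraph $D'$ of $D$ with $\dim(D')=\dim(D)-1$. Granting it, the conjecture is immediate: given $d$, use Theorem~\ref{thm:has no upper bound} to pick a transitive digraph $D_0$ with $\dim(D_0)=m>d$ and apply the Key Lemma repeatedly; since an induced subdigraph of a transitive digraph is again transitive (its arc set is the restriction of a transitive relation), we obtain transitive digraphs of dimensions $m,m-1,\ldots,1,0$ (using the empty digraph, Lemma~\ref{lem:no arc}, for $0$), in particular one of dimension exactly $d$.

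Two remarks show why the Key Lemma is the right target and why it is delicate. First, transitivity is essential: the directed path $P:=v_1\to v_2\to v_3$ has $\dim(P)=3$ by Lemma~\ref{lem:directed path of length two}, yet every proper induced subdigraph of $P$ has dimension $0$ or $1$, so the analogous statement fails for general digraphs; a proof must genuinely use the structure of finite strict partial orders. Second, one cannot delete an \emph{arbitrary} vertex: the digraph consisting of a single arc together with one isolated vertex is transitive with $\dim=2$ (it is not $1$ by Proposition~\ref{prop:dim1}, and at most $2$ by Proposition~\ref{prop:disjoint union}), but deleting the isolated vertex leaves dimension $1$ while deleting an endpoint leaves dimension $0$. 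Hence the vertex must be chosen well. The natural set-up is to take $D$ of minimum order among transitive digraphs with $\dim(D)=k$; then every proper induced subdigraph already has dimension at most $k-1$ by minimality, so it suffices to exhibit one vertex $v$ with $\dim(D-v)\ge k-1$.

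For that step I would start from an $\RR^{k}$-realizer $f$ of $D$ (no $\RR^{k-1}$-realizer exists, since $\dim(D)=k$) and search for a vertex $v$ and a coordinate $t\in[k]$ that is ``redundant on $V(D)\setminus\{v\}$'', in the sense that the $t$-th coordinate of $f$ contributes $0$ to the majority margin of every pair of the remaining vertices --- for instance because it is constant there, or because its induced total preorder never reverses the sign of a margin. Deleting $v$ and then discarding coordinate $t$ would give an $\RR^{k-1}$-realizer of $D-v$, which together with minimality forces $\dim(D-v)=k-1$. The main obstacle, and presumably the reason the statement is still only conjectured, is exactly this: proving that a minimum-order transitive digraph of dimension $k$ must, in \emph{some} $\RR^{k}$-realizer, admit such a vertex. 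A complementary line is a contradiction argument: let $k\ge 3$ be the least value not attained by a transitive digraph, take a transitive $D$ of minimum order with $\dim(D)=k+1$, observe that $\dim(D-v)\le k-1$ for every vertex $v$ (by minimality and the non-attainment of $k$), and try to assemble $\RR^{k-1}$-realizers of the various $D-v$ into an $\RR^{k}$-realizer of $D$, contradicting $\dim(D)=k+1$; the difficulty there is that these realizers need not be mutually compatible. Either way, the crux is a sharp quantitative control of how the weak majority dimension of a transitive digraph changes under deletion of a single vertex.
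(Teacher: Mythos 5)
The statement you are trying to prove is posed in the paper as a \emph{conjecture}; the paper offers no proof of it, so there is nothing to compare your argument against except the conjecture's status as open. Your submission is, by your own account, not a proof: everything is made to rest on your ``Key Lemma'' (every transitive digraph $D$ with $\dim(D)\ge 1$ has an induced subdigraph of dimension exactly $\dim(D)-1$), and that lemma is never proved. The reduction itself is sound --- Theorem~\ref{thm:has no upper bound} supplies a transitive digraph of dimension exceeding $d$, induced subdigraphs of transitive digraphs are transitive, Proposition~\ref{prop:induced dim} gives monotonicity, and iterating a unit-decrement lemma would indeed sweep out every value down to $0$ --- and your two cautionary examples (the directed path of length two, where the induced-subdigraph dimensions jump from $3$ to $1$, and the one-arc-plus-isolated-vertex digraph, where the decrement depends on which vertex is deleted) are both correct and genuinely illuminating about why the lemma cannot be proved by deleting an arbitrary vertex. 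But the crux --- exhibiting, in a minimum-order transitive digraph of dimension $k$, a vertex whose deletion keeps the dimension at $k-1$, or equivalently a realizer with a coordinate that becomes redundant after one deletion --- is exactly where you stop, and you say so yourself. Note also that your Key Lemma is strictly stronger than the conjecture (it asserts that the set of dimensions of induced subdigraphs of a fixed transitive digraph has no gaps, not merely that every value is attained by \emph{some} transitive digraph), so you may be setting yourself a harder target than necessary; a direct construction of a transitive digraph of each dimension $d$, e.g.\ by modifying the explicit family in the proof of Theorem~\ref{thm:has no upper bound} and controlling the dimension from above as well as below, might be a more tractable route. As it stands, the submission is a well-motivated reduction of one open problem to another, not a proof.
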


\section{Acknowledgement}
This research was supported by
the National Research Foundation of Korea(NRF) funded by the Korea government(MEST) (NRF-2017R1E1A1A03070489) and by the Korea government(MSIP) (2016R1A5A1008055).

\end{document}